\newtheorem{remark}{\it Remark}[section]
\newtheorem{proposition}{\it Proposition}[section]
\newtheorem{theorem}{\it Theorem}[section]
\newtheorem{corollary}{\it Corollary}[section]
\newcommand{\tra}[1]{\,{\vphantom{#1}}^{\textsc{t}}\!{#1}}
\begin{document}

\title{Quadratic choreographies}

\author[LMPA]{P.~Ryckelynck\corref{cor1}}
\ead{ryckelyn@lmpa.univ-littoral.fr}
\author[LMPA]{L.~Smoch}
\ead{smoch@lmpa.univ-littoral.fr}

\address[LMPA]{ULCO, LMPA, F-62100 Calais, France\\Univ Lille Nord de France, F-59000 Lille, France. CNRS, FR 2956, France.}

\begin{abstract}
This paper addresses the classical and discrete Euler-Lagrange equations for systems of $n$ particles interacting quadratically in $\mathbb{R}^d$. By highlighting the role played by the center of mass of the particles, we solve the previous systems via the classical quadratic eigenvalue problem (QEP) and its discrete transcendental generalization. The roots of classical and discrete QEP being given, we state some conditional convergence results. Next, we focus especially on periodic and choreographic solutions and we provide some numerical experiments which confirm the convergence.
\end{abstract}

\begin{keyword}
Calculus of variations \sep Functional equations \sep Discretization \sep Quadratic eigenvalue problems \sep Periodic and almost-periodic solutions

\MSC 49K21 \sep 49K15 \sep 65L03 \sep 65L12 

\end{keyword}

\maketitle

%%%%%%%%%%%%%%%%%%%%%%%%%%%%%%%%%%%%%%%%
%%%%%%%%%%%%%%%%%%%%%%%%%%%%%%%%%%%%%%%%
% SECTION 1
\section{Introduction}
This paper seeks to continue the development of the theory for the discrete calculus of variations which was initiated by Cresson and al., see \cite{Cre,CFT}. It consists originally in replacing the derivative $\dot{\mathbf{x}}(t)$ of the dynamic variable $\mathbf{x}(t)$ defined on $[t_0,t_f]$ with a $2N+1$ terms scale derivative
\begin{equation}
\Box _\varepsilon \mathbf{x}(t)=\sum_{j=-N}^N\frac{\gamma_j}{\varepsilon}\mathbf{x}(t+j\varepsilon)\chi _{-j}(t),~~t\in[t_0,t_f],~~\gamma_j\in\mathbb{C}
\label{ourbox}
\end{equation}
where $\chi_j(t)$ denotes the characteristic function of $[t_0,t_f]\cap[t_0+j\varepsilon,t_f+j\varepsilon]$, for some time delay $\varepsilon$. 

We consider a lagrangian $\mathcal{L}$ of $n$ particles in $\mathbb{R}^d$, where $d$ denotes the ``physical" dimension. The principle of least action may be extended to the case of non-differentiable dynamic variables. For conservative systems, the equations of motion may be returned as the following two dynamic sets of equations
\begin{equation}
\displaystyle \ddot{\mathbf{x}}_j(t)=F_j(\mathbf{x}_1,\ldots,\mathbf{x}_n)\mbox{ and }\displaystyle -\Box_{-\varepsilon}\Box_{\varepsilon}\mathbf{x}_j(t)=\tilde{F}_j(\mathbf{x}_1,\ldots,\mathbf{x}_n)
\label{systdyn}
\end{equation}
where the functions $F_j,\tilde{F}_j$ are built from the specific interaction between the particles. While the first system in (\ref{systdyn}) deals with ODE, the second one consists in a set of functional difference equations. We investigate for each system the existence of pseudo-periodic solutions of the shape
\begin{equation}
\mathbf{u}(t)=\mathbf{u}_0+\sum_{\ell=1}^{K}e^{\lambda_\ell t}\mathbf{u}_{\ell},
\label{psdf}
\end{equation}
where $\mathbf{u}_0$ and $\mathbf{u}_{\ell}$ constitute a family of $K+1$ vectors of $\mathbb{C}^d$ and $(\lambda_\ell)_\ell\in(\mathbb{C}^\star)^K$ is a sequence of $K$ distinct complex numbers.

The rest of this paper is organized as follows. Section 2 is devoted to the derivation of the classical and discrete Euler-Lagrange equations (respectively abbreviated as C.E.L. and D.E.L.) and highlights the role played by the center of mass  $\frac{1}{n}\mathbf{x}_s(t)$ where $\mathbf{x}_s(t)=\sum_j\mathbf{x}_j(t)$. 
In Section 3, we present a method for solving the equations of motion for generic lagrangians for C.E.L. as well as D.E.L.. The first step of this method determines $\mathbf{x}_s(t)$ from some generalized (quadratic or transcendental) eigenvalue problem. The second step seeks $\mathbf{x}_j(t)$ from $\mathbf{x}_s(t)$ by solving another eigenvalue problem. Section 4 is devoted to the convergence of the generalized eigenvalue problem linked to the D.E.L. as $\varepsilon$ tends to 0. Section 5 deals with the existence and the features of periodic and choreographic solutions. Finally, Section 6 presents some numerical experiments illustrating the phenomenon of convergence as $\varepsilon$ tends to 0, for some various operators $\Box_\varepsilon$.

%%%%%%%%%%%%%%%%%%%%%%%%%%%%%%%%%%%%%%%%
%%%%%%%%%%%%%%%%%%%%%%%%%%%%%%%%%%%%%%%%
% SECTION 2
\section{Equations of motion for symmetric quadratic lagrangians of $n$ particles systems in $\mathbb{R}^d$}

The principle of least discrete action has been developed in \cite{RS1,RS2} to which we refer throughout the paper.  We denote by $\mathcal{C}_{pw}$ the space of the functions $\mathbf{x}:[t_0,t_f]\rightarrow \mathbb{R}^d$ continuous on each interval $[t_0+j\varepsilon,t_0+(j+1)\varepsilon ]\cap [t_0,t_f]$ for all $j\geq 0$ and small enough, i.e. $j\leq\frac{t_f-t_0}{\varepsilon}$. If $\mathbf{X}=(\mathbf{x}_1,\ldots\mathbf{x}_n)$ denotes a system of $n$ functions in $\mathcal{C}_{pw}$, we may think of $\mathbf{X}$ as the set of dynamic variables describing the state of a system of $n$ interacting particles in $\mathbb{R}^d$.

We consider actions $\mathcal{A}_{cont},\mathcal{A}_{disc}:\mathcal{C}_{pw}^n\rightarrow\mathbb{R}$ of the shape 
\begin{equation}
\mathcal{A}_{cont}(\mathbf{X})=\int_{t_0}^{t_f} \mathcal{L}(\mathbf{X}(t),\dot{\mathbf{X}}(t))dt,\hspace{0.5cm}\mathcal{A}_{disc}(\mathbf{X})=\int_{t_0}^{t_f} \mathcal{L}(\mathbf{X}(t),\Box_\varepsilon{\mathbf{X}}(t))dt.
\label{action}
\end{equation}
From now on, we drop $t$ from the formulas when it is clear enough.

We introduce a general quadratic lagrangian of $n$ particles in $\mathbb{R}^d$, compatible with discrete symmetries of the system. Let $J_1,\ldots,J_5\in\mathcal{C}^0([t_0,t_f],\mathbb{R}^{d\times d})$, $J_1,\ldots,J_4$ be symmetric matrices and $J_6,J_7\in\mathcal{C}^0([t_0,t_f],\mathbb{R}^d)$. For an isolated particle with position $\mathbf{x}$ and velocity $\mathbf{y}$ we may set
\begin{center}
$\displaystyle \mathcal{L}_1(\mathbf{x},\mathbf{y})=\frac{1}{2}\tra{\mathbf{y}}J_1\mathbf{y}+\frac{1}{2}\tra{\mathbf{x}}J_2\mathbf{x}+\tra{\mathbf{x}}J_5\mathbf{y}+\tra{J}_6\mathbf{y}+\tra{J}_7\mathbf{x}$.
\end{center}
Next, two particles with positions $\mathbf{x}_j$, $\mathbf{x}_k$, and velocities $\mathbf{y}_j$, $\mathbf{y}_k$ are interacting for pairs in conformity with the following lagrangian
\begin{center}
$\displaystyle \mathcal{L}_2(\mathbf{x}_j,\mathbf{y}_j,\mathbf{x}_k,\mathbf{y}_k)=\tra{\mathbf{y}}_j J_3\mathbf{y}_k+\tra{\mathbf{x}}_j J_4\mathbf{x}_k$.
\end{center}
Therefore, the lagrangian of the whole system is 
\begin{equation}
\mathcal{L}=\sum_{j=1}^n\mathcal{L}_1(\mathbf{x}_j,\dot{\mathbf{x}}_j)+\sum_{j\neq k}\mathcal{L}_2(\mathbf{x}_j,\dot{\mathbf{x}}_j,\mathbf{x}_k,\dot{\mathbf{x}}_k).
\label{choreolag}
\end{equation}

\begin{theorem}
Let $\mathbf{X}=(\mathbf{x}_1,\ldots,\mathbf{x}_n)$ in $\mathcal{C}_{pw}^n$ and $\displaystyle \mathbf{x}_s=\sum_{j=1}^n\mathbf{x}_j$.\\
A necessary and sufficient condition for $\mathbf{X}$ to be a critical point of $\mathcal{A}_{cont}$ in $\mathcal{C}_{pw}^n$ is that $\mathbf{X}$ satisfies the dynamic system
\begin{eqnarray}
(J_1-2J_3)\ddot{\mathbf{x}_{j}}+(-2\dot{J_3}+\tra{J}_5+\dot{J_1}-J_5)\dot{\mathbf{x}}_{j}+
(\tra{\dot{J}}_5-J_2+2J_4)\mathbf{x}_j=\nonumber\\
-2J_3\ddot{\mathbf{x}}_s-2\dot{J_3}\dot{\mathbf{x}}_s+2J_4\mathbf{x}_s+(J_7-\dot{J_6}),~~\forall j.
\label{CELchoreo}
\end{eqnarray}
A necessary and sufficient condition for $\mathbf{X}$ to be a critical point of $\mathcal{A}_{disc}$ in $\mathcal{C}_{pw}^n$ is that $\mathbf{X}$ satisfies the linear functional recurrence system of equations
\begin{eqnarray}
\Box_{-\varepsilon}((J_1-2J_3)\Box_\varepsilon\mathbf{x}_j)+\Box_{-\varepsilon}(\tra{J}_5\mathbf{x}_j)+J_5\Box_\varepsilon\mathbf{x}_j+(J_2-2J_4)\mathbf{x}_j=\nonumber\\
-2\Box_{-\varepsilon}(J_3\Box_\varepsilon\mathbf{x}_s)-2J_4\mathbf{x}_s-(\Box_{-\varepsilon}J_6+J_7)),~~\forall j.
\label{DELchoreo}
\end{eqnarray}
\label{thmeqofmot}
\end{theorem}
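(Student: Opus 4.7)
The strategy is to apply, coordinate by coordinate, the continuous Euler-Lagrange equation for the first statement and the discrete Euler-Lagrange equation developed in \cite{RS1,RS2} for the second statement, then collapse the interaction sums using the identity $\sum_{k\neq j}\mathbf{x}_k=\mathbf{x}_s-\mathbf{x}_j$.

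First I would compute the partial differentials $\partial_{\mathbf{x}_j}\mathcal{L}$ and $\partial_{\mathbf{y}_j}\mathcal{L}$ evaluated along $(\mathbf{X},\dot{\mathbf{X}})$. For the one-body part, the symmetry of $J_1,J_2$ yields $\partial_{\mathbf{y}_j}\mathcal{L}_1=J_1\dot{\mathbf{x}}_j+\tra{J}_5\mathbf{x}_j+J_6$ and $\partial_{\mathbf{x}_j}\mathcal{L}_1=J_2\mathbf{x}_j+J_5\dot{\mathbf{x}}_j+J_7$. For the two-body part, the index $j$ appears in both the first and the second slot of $\mathcal{L}_2$; combining these two contributions and using the symmetry of $J_3,J_4$ gives $\partial_{\mathbf{y}_j}\bigl(\sum_{k\neq \ell}\mathcal{L}_2(\mathbf{x}_k,\dot{\mathbf{x}}_k,\mathbf{x}_\ell,\dot{\mathbf{x}}_\ell)\bigr)=2J_3\sum_{k\neq j}\dot{\mathbf{x}}_k=2J_3(\dot{\mathbf{x}}_s-\dot{\mathbf{x}}_j)$, and similarly $2J_4(\mathbf{x}_s-\mathbf{x}_j)$ for the position derivative.

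Next I would insert these expressions into the classical Euler-Lagrange equation $\frac{d}{dt}\partial_{\mathbf{y}_j}\mathcal{L}=\partial_{\mathbf{x}_j}\mathcal{L}$, distributing the time derivative over the time-dependent matrices $J_i$ and their derivatives. Grouping the $\ddot{\mathbf{x}}_j,\dot{\mathbf{x}}_j,\mathbf{x}_j$ terms on the left and pushing everything involving $\mathbf{x}_s$ or the inhomogeneous data $J_6,J_7$ to the right produces precisely the coefficients $(J_1-2J_3)$, $(-2\dot{J}_3+\tra{J}_5+\dot{J}_1-J_5)$, $(\tra{\dot{J}}_5-J_2+2J_4)$ on the left and $-2J_3\ddot{\mathbf{x}}_s-2\dot{J}_3\dot{\mathbf{x}}_s+2J_4\mathbf{x}_s+(J_7-\dot{J}_6)$ on the right, which is \eqref{CELchoreo}.

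For the discrete statement I would follow the same scheme but invoke the discrete least action principle of \cite{RS1,RS2}: the critical point condition for $\mathcal{A}_{disc}$ replaces $\frac{d}{dt}$ by the adjoint scale derivative $-\Box_{-\varepsilon}$ (obtained from a discrete integration-by-parts applied to $\Box_\varepsilon$). Repeating the same partial-derivative computation, but now with $\Box_\varepsilon\mathbf{x}_j$ in place of $\dot{\mathbf{x}}_j$, yields $\Box_{-\varepsilon}\bigl((J_1-2J_3)\Box_\varepsilon\mathbf{x}_j+\tra{J}_5\mathbf{x}_j+2J_3\Box_\varepsilon\mathbf{x}_s+J_6\bigr)=J_5\Box_\varepsilon\mathbf{x}_j+(J_2-2J_4)\mathbf{x}_j+2J_4\mathbf{x}_s+J_7$, up to sign conventions; rearranging produces \eqref{DELchoreo}. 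The main obstacle is purely bookkeeping: keeping track of which side of each quadratic form $\mathbf{x}_j$ sits on, correctly counting the factor $2$ coming from the double appearance of $j$ in $\mathcal{L}_2$, and handling the non-commutation of $\Box_{-\varepsilon}$ with multiplication by the time-dependent matrices $J_i$ so that the variable coefficients stay inside the $\Box_{-\varepsilon}$ brackets exactly as written in \eqref{DELchoreo}.
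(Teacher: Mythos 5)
Your proposal follows essentially the same route as the paper: compute $\partial_{\mathbf{x}_j}\mathcal{L}$ and $\partial_{\dot{\mathbf{x}}_j}\mathcal{L}$ (resp. $\partial_{\Box_\varepsilon\mathbf{x}_j}\mathcal{L}$) using the symmetry of $J_1,\ldots,J_4$, collapse the pair sums via $\sum_{k\neq j}\mathbf{x}_k=\mathbf{x}_s-\mathbf{x}_j$, and substitute into the classical and discrete Euler--Lagrange equations of \cite{RS1,RS2}, which is exactly the paper's argument. The only caveat is your sign convention in the discrete step (the paper's discrete Euler--Lagrange equation reads $\Box_{-\varepsilon}\partial_{\Box_\varepsilon\mathbf{x}_j}\mathcal{L}+\partial_{\mathbf{x}_j}\mathcal{L}=0$, i.e. $\frac{d}{dt}$ is replaced by $-\Box_{-\varepsilon}$), which you flagged and which only affects bookkeeping, not the substance.
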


\begin{proof}
We first recall the classical and discrete Euler-Lagrange equations, which are respectively given by 
\begin{equation}
-\frac{d}{dt}\frac{\partial \mathcal{L}}{\partial \dot{\mathbf{x}}_j}(t,\mathbf{X}(t),\dot{\mathbf{X}}(t))+\frac{\partial \mathcal{L}}{\partial \mathbf{x}_j}(t,\mathbf{X}(t),\dot{\mathbf{X}}(t))=0
\label{mcel}
\end{equation}
and
\begin{equation}
\Box_{-\varepsilon }\frac{\partial \mathcal{L}}{\partial \Box_{\varepsilon}\mathbf{x}_j}(t,%
\mathbf{X}(t),\Box _\varepsilon \mathbf{X}(t))+\frac{\partial \mathcal{L}}{%
\partial \mathbf{x}_j}(t,\mathbf{X}(t),\Box _\varepsilon \mathbf{X}(t))=0,
\label{mcelbis}
\end{equation}
for all $j\in\{1,\ldots,n\}$.\\
The computation of gradients of $\mathcal{L}$ needs the following property : if $\mathbf{a},\mathbf{b}\in\mathbb{R}^d$,
\begin{center}
$\nabla_\mathbf{x}(\tra{\mathbf{a}}\mathbf{x}+\tra{\mathbf{x}}\mathbf{b})=\mathbf{a}+\mathbf{b}$.
\end{center}
Let us prove (\ref{CELchoreo}).
Because of the symmetry of $J_1,J_2,J_3,J_4$, we get
\begin{center}
$\begin{array}{rcl}
\displaystyle \frac{\partial \mathcal{L}}{\partial\dot{\mathbf{x}}_{j}} & = & \displaystyle J_1\dot{\mathbf{x}}_{j}+2J_3\sum_{k\neq j}\dot{\mathbf{x}}_k+\tra{J}_5\mathbf{x}_j+J_6\mbox{ and}\\
\displaystyle \frac{\partial \mathcal{L}}{\partial\mathbf{x}_j} & = & \displaystyle J_2\mathbf{x}_j+2J_4\sum_{k\neq j}\mathbf{x}_k+J_5\dot{\mathbf{x}}_{j}+J_7.
\end{array}$
\end{center}
Then, by setting $\displaystyle \mathbf{x}_s=\sum_{j=1}^n\mathbf{x}_j$, we get
\begin{center}
$\begin{array}{rcl}
\displaystyle \frac{\partial \mathcal{L}}{\partial\dot{\mathbf{x}}_{j}} & = & \displaystyle(J_1-2J_3)\dot{\mathbf{x}}_{j}+2J_3\dot{\mathbf{x}}_s+\tra{J}_5\mathbf{x}_j+J_6\mbox{ and}\vspace{0.1cm}\\ 
\displaystyle \frac{\partial \mathcal{L}}{\partial\mathbf{x}_j} & = & (J_2-2J_4)\mathbf{x}_j+2J_4\mathbf{x}_s+J_5\dot{\mathbf{x}}_{j}+J_7.
\end{array}$
\end{center}
The equation (\ref{mcel}) gives for all $j$ :
\begin{center}
$\displaystyle (\dot{J_1}-2\dot{J_3})\dot{\mathbf{x}}_{j}+(J_1-2J_3)\ddot{\mathbf{x}_{j}}+\tra{\dot{J}}_5\mathbf{x}_j+\tra{J}_5\dot{\mathbf{x}}_{j}+\dot{J_6}+2\dot{J_3}\dot{\mathbf{x}}_s+2J_3\ddot{\mathbf{x}_s}=(J_2-2J_4)\mathbf{x}_j+J_5\dot{\mathbf{x}}_{j}+J_7+2J_4\mathbf{x}_s$
\end{center}
which is equivalent to (\ref{CELchoreo}).\\
The proof of (\ref{DELchoreo}) is quite similar since (\ref{mcelbis}) gives for all $j$ :
\begin{center}
$\displaystyle \Box_{-\varepsilon}(J_1\Box_\varepsilon\mathbf{x}_j-2J_3\Box_\varepsilon\mathbf{x}_j)+\Box_{-\varepsilon}(\tra{J}_5\mathbf{x}_j)+J_5\Box_\varepsilon\mathbf{x}_j+(J_2-2J_4)\mathbf{x}_j=-2\Box_{-\varepsilon}(J_3\Box_\varepsilon\mathbf{x}_s)-2J_4\mathbf{x}_s-(\Box_{-\varepsilon}J_6+J_7)$.
\end{center}
which implies (\ref{DELchoreo}).
\end{proof}
We notice that the equations (\ref{CELchoreo}) and (\ref{DELchoreo}) are quite uncoupled since the coupling is realized only through the vector $\mathbf{x}_s$. We mention two simple consequences of the previous result. The first one arises from summing all equations (\ref{CELchoreo}) or summing all equations in (\ref{DELchoreo}) over $j$, and the second one deals with time-independent lagrangians such that $J_5$ is skew-symmetric.

\begin{corollary}
If $\mathbf{X}$ is a solution to (\ref{CELchoreo}), then the sum $\mathbf{x}_s$ satisfies the dynamic system
\begin{eqnarray}
(J_1+2(n-1)J_3)\ddot{\mathbf{x}_s}+(\dot{J_1}+2(n-1)\dot{J_3}+\tra{J}_5-J_5)\dot{\mathbf{x}}_s-\nonumber\\
(J_2+2(n-1)J_4-\tra{\dot{J}}_5)\mathbf{x}_s+n(\dot{J_6}-J_7)=0.
\label{CELchoreoCTsum}
\end{eqnarray}
Similarly, if $\mathbf{X}$ is a solution to (\ref{DELchoreo}), then $\mathbf{x}_s$ satisfies the functional equation
\begin{eqnarray}
\Box_{-\varepsilon}(J_1\Box_\varepsilon\mathbf{x}_s)+2(n-1)\Box_{-\varepsilon}(J_3\Box_\varepsilon\mathbf{x}_s)+\Box_{-\varepsilon}(\tra{J}_5\mathbf{x}_s)+\nonumber\\
J_5\Box_\varepsilon\mathbf{x}_s+(J_2+2(n-1)J_4)\mathbf{x}_s+n\Box_{-\varepsilon}J_6+nJ_7=0.
\label{DELchoreoCTsum}
\end{eqnarray}
\label{coro2.1}
\end{corollary}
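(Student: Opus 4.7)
My plan is to derive both identities by summing the equations of motion in Theorem~\ref{thmeqofmot} over the particle index $j$, using the linearity of ordinary differentiation and of the scale operators $\Box_{\pm\varepsilon}$. The key observation is that the coefficient matrices $J_1,\ldots,J_7$ do not depend on $j$, so that in every term on the left-hand side the replacement $\sum_j \mathbf{x}_j = \mathbf{x}_s$ can be performed termwise. The right-hand sides of (\ref{CELchoreo}) and (\ref{DELchoreo}) are already expressed in terms of $\mathbf{x}_s$, so summing merely multiplies them by $n$.

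For the continuous equation (\ref{CELchoreoCTsum}), I would sum (\ref{CELchoreo}) from $j=1$ to $n$, then transpose the $\mathbf{x}_s$-contributions from the right to the left. Collecting coefficients of $\ddot{\mathbf{x}}_s$, $\dot{\mathbf{x}}_s$ and $\mathbf{x}_s$ yields the factors $J_1+2(n-1)J_3$, $\dot{J}_1+2(n-1)\dot{J}_3+\tra{J}_5-J_5$, and $\tra{\dot{J}}_5-J_2-2(n-1)J_4$ respectively. The shift $2(n-1)$ rather than $2n$ comes from the fact that one copy of $J_3$ (resp.\ $J_4$) already sits in the left-hand coefficient of $\ddot{\mathbf{x}}_j$ (resp.\ $\mathbf{x}_j$) as $-2J_3$ (resp.\ $+2J_4$), which partially cancels the $-2nJ_3$ (resp.\ $+2nJ_4$) coming from summing the right-hand side.

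For the discrete equation (\ref{DELchoreoCTsum}), the argument is identical once one notes that $\Box_{\pm\varepsilon}$ is linear in its argument, hence $\sum_j \Box_{\pm\varepsilon}\mathbf{x}_j = \Box_{\pm\varepsilon}\mathbf{x}_s$, and that the nested operator $\Box_{-\varepsilon}(J_3\Box_\varepsilon\,\cdot)$ is linear in the inner function as well. Summing (\ref{DELchoreo}) over $j$ and moving all $\mathbf{x}_s$-terms to the left produces the expected $J_1+2(n-1)J_3$ combination inside the nested scale operator, together with the constant inhomogeneous term $n\Box_{-\varepsilon}J_6+nJ_7$.

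There is no real conceptual obstacle here; the entire argument is algebraic bookkeeping. The only point requiring care is the sign accounting when the terms $-2J_3\ddot{\mathbf{x}}_s$, $-2\dot{J}_3\dot{\mathbf{x}}_s$, $2J_4\mathbf{x}_s$ (continuous case), or $-2\Box_{-\varepsilon}(J_3\Box_\varepsilon\mathbf{x}_s)$, $-2J_4\mathbf{x}_s$ (discrete case) are transposed, so as to correctly arrive at the coefficient $2(n-1)$ rather than $2n$ or $2(n+1)$.
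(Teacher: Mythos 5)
Your proposal is correct and follows exactly the route the paper indicates (it gives no separate proof, merely noting that the corollary "arises from summing all equations over $j$"): summing (\ref{CELchoreo}) and (\ref{DELchoreo}) over $j$, using linearity of $\frac{d}{dt}$ and $\Box_{\pm\varepsilon}$, and transposing the $\mathbf{x}_s$-terms yields the coefficients $J_1+2(n-1)J_3$, $\dot{J}_1+2(n-1)\dot{J}_3+\tra{J}_5-J_5$, $\tra{\dot{J}}_5-J_2-2(n-1)J_4$ and the inhomogeneous terms $n(\dot{J}_6-J_7)$, resp. $n\Box_{-\varepsilon}J_6+nJ_7$, exactly as stated. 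Your sign bookkeeping for the $2(n-1)$ factors checks out.
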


\begin{corollary}
Suppose that the functions $J_k(t)$, $k\in\{1,\ldots,7\}$, are constant w.r.t. time and $J_5$ is skew-symmetric. The systems of equations (\ref{CELchoreoCTsum}) and (\ref{CELchoreo}) simplify respectively into
\begin{equation}
(J_1+2(n-1)J_3)\ddot{\mathbf{x}_s}-2J_5\dot{\mathbf{x}}_s-(J_2+2(n-1)J_4)\mathbf{x}_s=nJ_7
\label{CELchoreoCTsumsimple}
\end{equation}
and
\begin{equation}
(J_1-2J_3)\ddot{\mathbf{x}_{j}}-2J_5\dot{\mathbf{x}}_{j}-(J_2-2J_4)\mathbf{x}_j=-2J_3\dot{\mathbf{x}}_s+2J_4\mathbf{x}_s+J_7.
\label{CELchoreoCT}
\end{equation}
Similarly, the systems of functional recurrence equations (\ref{DELchoreoCTsum}) and (\ref{DELchoreo}) simplify respectively into
\begin{eqnarray}
(J_1+2(n-1)J_3)\Box_{-\varepsilon}\Box_\varepsilon\mathbf{x}_s+J_5(\Box_\varepsilon\mathbf{x}_s-\Box_{-\varepsilon}\mathbf{x}_s)+(J_2+2(n-1)J_4)\mathbf{x}_s & = & \nonumber\\
-n\Box_{-\varepsilon}(1)J_6-nJ_7 \qquad\qquad\qquad\qquad& & 
\label{DELchoreoCTsumsimple}
\end{eqnarray}
and
\begin{eqnarray}
(J_1-2J_3)\Box_{-\varepsilon}\Box_\varepsilon\mathbf{x}_j+J_5(\Box_\varepsilon\mathbf{x}_j-\Box_{-\varepsilon}\mathbf{x}_j)+(J_2-2J_4)\mathbf{x}_j & = & \nonumber\\
-2J_3\Box_{-\varepsilon}\Box_\varepsilon\mathbf{x}_s-2J_4\mathbf{x}_s+\Box_{-\varepsilon}J_6-J_7.\qquad & &
\label{DELchoreoCT}
\end{eqnarray}
\label{coro2.2}
\end{corollary}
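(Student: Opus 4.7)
The plan is to establish all four simplifications by direct substitution of the two hypotheses — constancy of every $J_k$, which kills the terms containing $\dot{J_k}$ or $\tra{\dot{J}}_5$, together with skew-symmetry of $J_5$, allowing the rewrite $\tra{J}_5=-J_5$ — into the corresponding general equations supplied by Theorem \ref{thmeqofmot} and Corollary \ref{coro2.1}. No extra machinery is required, so the argument is pure bookkeeping.

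For the continuous pair I would start from (\ref{CELchoreoCTsum}): the bracket multiplying $\dot{\mathbf{x}}_s$ reduces to $\tra{J}_5-J_5=-2J_5$, the coefficient of $\mathbf{x}_s$ loses its $\tra{\dot{J}}_5$ contribution, and the source $n(\dot{J_6}-J_7)$ collapses to $-nJ_7$, giving (\ref{CELchoreoCTsumsimple}) after moving it to the right-hand side. The same term-by-term reduction applied to (\ref{CELchoreo}) produces (\ref{CELchoreoCT}), the coupling with $\mathbf{x}_s$ surviving only through the $2J_3$ and $2J_4$ terms.

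For the discrete pair the extra ingredient is the pointwise linearity of the scale derivative: whenever $A$ is constant, $\Box_{\pm\varepsilon}(A\mathbf{x})=A\,\Box_{\pm\varepsilon}\mathbf{x}$, straight from the definition (\ref{ourbox}); analogously $\Box_{-\varepsilon}J_6=\Box_{-\varepsilon}(1)\,J_6$. Applying this to every composite block in (\ref{DELchoreo}) and (\ref{DELchoreoCTsum}) replaces $\Box_{-\varepsilon}\bigl((J_1-2J_3)\Box_\varepsilon\mathbf{x}_j\bigr)$ by $(J_1-2J_3)\,\Box_{-\varepsilon}\Box_\varepsilon\mathbf{x}_j$, and similarly for the $J_3$-block on the right-hand side. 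The only computation worth isolating is the fusion of the two $J_5$-terms on the left: using $\tra{J}_5=-J_5$, I get $\Box_{-\varepsilon}(\tra{J}_5\mathbf{x}_j)+J_5\,\Box_\varepsilon\mathbf{x}_j=-J_5\,\Box_{-\varepsilon}\mathbf{x}_j+J_5\,\Box_\varepsilon\mathbf{x}_j=J_5(\Box_\varepsilon\mathbf{x}_j-\Box_{-\varepsilon}\mathbf{x}_j)$, which is exactly the $J_5$-packet of (\ref{DELchoreoCT}); the identical identity with $\mathbf{x}_s$ in place of $\mathbf{x}_j$ produces the $J_5$-packet of (\ref{DELchoreoCTsumsimple}). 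The main obstacle, if any, is consistent sign-tracking across the two operators $\Box_\varepsilon$ and $\Box_{-\varepsilon}$, but nothing in the argument is genuinely obstructive.
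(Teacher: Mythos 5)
Your proof is correct and is exactly the intended argument: the paper states Corollary \ref{coro2.2} without proof, as an immediate consequence of Theorem \ref{thmeqofmot} and Corollary \ref{coro2.1}, obtained precisely by your substitutions ($\dot{J}_k=0$, $\tra{J}_5=-J_5$, and commutation of the constant matrices with $\Box_{\pm\varepsilon}$, including the fusion $\Box_{-\varepsilon}(\tra{J}_5\mathbf{x})+J_5\Box_\varepsilon\mathbf{x}=J_5(\Box_\varepsilon\mathbf{x}-\Box_{-\varepsilon}\mathbf{x})$). Note only that carrying the bookkeeping through literally yields $-2J_3\ddot{\mathbf{x}}_s$ on the right-hand side of (\ref{CELchoreoCT}) and $-\Box_{-\varepsilon}(1)J_6$ on the right-hand side of (\ref{DELchoreoCT}), consistently with (\ref{CELchoreo}), (\ref{DELchoreo}) and the summed equation (\ref{DELchoreoCTsumsimple}); the $\dot{\mathbf{x}}_s$ and the $+\Box_{-\varepsilon}J_6$ appearing in the corollary as printed are typographical slips of the paper, not defects of your argument.
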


\begin{remark}\rm
Let $J_8,J_9,J_{10}\in\mathcal{C}^0([t_0,t_f],\mathbb{R}^{nd\times nd})$ denote the matrices constructed by blocks as follows
\begin{center}
$J_8=\begin{pmatrix}J_1 & 2J_3 & \ldots & 2J_3\\ 2J_3 & J_1 & \ddots & \vdots\\\vdots & \ddots & \ddots & 2J_3\\2J_3 & \ldots & 2J_3 & J_1\end{pmatrix}$, $J_9=\begin{pmatrix}J_2 & 2J_4 & \ldots & 2J_4\\ 2J_4 & J_2 & \ddots & \vdots\\\vdots & \ddots & \ddots & 2J_4\\2J_4 & \ldots & 2J_4 & J_2\end{pmatrix}$,
\end{center}
and $J_{10}=diag(J_5,\ldots,J_5)$. If $\mathbf{X}\in\mathcal{C}_{pw}^n$ is a critical point of $\mathcal{A}_{cont}$, i.e. $\mathbf{X}$ satisfies (\ref{CELchoreo}) and if $\mathbf{Y}\in\mathcal{C}_{pw}^n$ vanishes at $t_0$ and $t_f$, then we have 
\begin{center}
$\displaystyle \mathcal{A}_{cont}(\mathbf{X}+\mathbf{Y})-\mathcal{A}_{cont}(\mathbf{X})=\frac{1}{2}\int_{t_0}^{t_f}(\tra{\dot{\mathbf{Y}}}J_8\dot{\mathbf{Y}}+\tra{\mathbf{Y}}J_9\mathbf{Y}+\tra{\mathbf{Y}}J_{10}\dot{\mathbf{Y}})dt$. 
\end{center}
As a consequence, if the integrand is a positive definite quadratic form w.r.t. $(\mathbf{Y},\dot{\mathbf{Y}})$, then the equations (\ref{CELchoreo}) are necessary and sufficient conditions for a strict minimum of the action $\mathcal{A}_{cont}$ to occur. Especially, if $J_5=0$ and the matrices $J_8$ and $J_9$ are definite positive, such an optimum occurs.
\end{remark}

\begin{remark}\rm
Let $A\in\mathbb{R}^{d\times d}$ a nonsingular matrix and $\mathbf{b}\in\mathbb{R}^d$ be given. Let us consider the transformation of the whole system
\begin{center}
$\hat{\mathbf{x}}_j(t)=A\mathbf{x}_j(t)+\mathbf{b}$.
\end{center}
Then this transformation is covariant for quadratic lagrangians in the sense that $\mathcal{L}(\mathbf{x}_1,\ldots,\mathbf{x}_n)$ is of the shape (\ref{choreolag}) iff $\hat{\mathcal{L}}(\hat{\mathbf{x}}_1,\ldots,\hat{\mathbf{x}}_n)$ is of the shape (\ref{choreolag}). Moreover the properties of symmetry for $\hat{J}_1,\ldots,\hat{J}_5$ are equivalent to those for $J_1,\ldots,J_5$. At last, the equations of motion (\ref{systdyn}) are covariant altogether as can be shown from the formula for affine forces 
\begin{center}
$\hat{F}_j(\hat{\mathbf{X}})=AF_j(A^{-1}(\hat{\mathbf{x}}_1-\mathbf{b}),\ldots,A^{-1}(\hat{\mathbf{x}}_n-\mathbf{b}))$.
\end{center}
where $\hat{\mathbf{X}}=(\hat{\mathbf{x}}_1,\ldots,\hat{\mathbf{x}}_n)$.
\end{remark}

\begin{remark}\rm
If $J_5=0$, the system has a lagrangian of the shape $T(\dot{\mathbf{X}})-U(\mathbf{X})$ and consequently, it is conservative, i.e. the energy
\begin{equation}
\sum_{j=1}^n\left(\frac{1}{2}\tra{\dot{\mathbf{x}}}_{j}J_1\dot{\mathbf{x}}_{j}-
\frac{1}{2}\tra{\mathbf{x}}_j J_2\mathbf{x}_j+
\tra{J}_6\dot{\mathbf{x}}_{j}-
\tra{J}_7\mathbf{x}_j\right)+\sum_{j\neq k}\left(\tra{\dot{\mathbf{x}}}_{j}J_3\dot{\mathbf{x}}_k-
\tra{\mathbf{x}}_j J_4\mathbf{x}_k\right)
\label{comenergy}
\end{equation}
is a constant of motion.
\end{remark}

%%%%%%%%%%%%%%%%%%%%%%%%%%%%%%%%%%%%%%%%
%%%%%%%%%%%%%%%%%%%%%%%%%%%%%%%%%%%%%%%%
% SECTION 3
\section{Solutions of equations of motion in the general case}

%%%%%%%%%%%%%%%%%%%%%%%%%%%%%%%%%%%%%%%%
% SOUS-SECTION 3.1
\subsection{Preliminaries on Quadratic Eigenvalue Problems}

We provide in this section the solutions to problems presented in Corollaries \ref{coro2.1} and \ref{coro2.2}. From now on we suppose that the vectors and matrices $J_k$, $k=1,\ldots,7$, are time-independent and that $J_5$ is skew-symmetric. By general case, we mean that the set of coefficients $(J_k)_{k=1,\ldots,7}$, satisfying the conditions (\ref{generalconditionsforQEPCEL}) and (\ref{generalconditionsforQEPDEL}) below is everywhere dense in $(\mathbb{R}^{d\times d})^5\times(\mathbb{R}^d)^2$. 

According to \cite{L1,L2,TM} we define the Quadratic Eigenvalue Problem associated to $(A,B,C)$ as the search of the complex roots of the discriminantal equation 
\begin{equation}
\det(A\lambda^2+B\lambda+C)=0
\end{equation}
where the l.h.s. is a polynomial of $\lambda$ of degree $2d$, together with the description of the various kernels $\ker(A\lambda^2+B\lambda+C)$. The reader is referred to \cite{TM} for a survey of theory applications and algorithms of the QEP.

It is a classical old fact \cite{L1,GLR,TM} that, if all the roots $\lambda_\ell$, $\ell\in\{1,\ldots,2d\}$, of $A\lambda^2+B\lambda+C$ are distinct  and $C$ is invertible, the general solution to the dynamic system $A\ddot{\mathbf{x}}+B\dot{\mathbf{x}}+C\mathbf{x}=\mathbf{k}_0$ has the shape (\ref{psdf}), with $K=2d$, $\mathbf{u}_0=C^{-1}\mathbf{k}_0$ and $\mathbf{u}_{\ell}\in\ker(\lambda_\ell^2A+\lambda_\ell B+C)$. When the number of roots is less than $2d$, slight more complicated expressions for the solutions may be found in \cite{L1,L2,TM}\\
Because of equations (\ref{CELchoreoCTsum}) to (\ref{DELchoreoCT}) and the previous discussion, we may provide, under specific assumptions, the shape of the solutions to C.E.L. and D.E.L..

%%%%%%%%%%%%%%%%%%%%%%%%%%%%%%%%%%%%%%%%
% SOUS-SECTION 3.2
\subsection{The case of C.E.L.}

We introduce the matrix-valued function
\begin{equation}
\mathcal{P}_\nu(\lambda):=(J_1+2(\nu-1)J_3)\lambda^2-2J_5\lambda -(J_2+2(\nu-1)J_4)
\label{Jlambda}
\end{equation}
and the following subsets of $\mathbb{C}$ 
\begin{center}
$\mathcal{Q}_\nu:=\left\{\lambda\in\mathbb{C}/\det(\mathcal{P}_\nu(\lambda))=0\right\}$, $\forall \nu\in\mathbb{R}$.
\end{center}

\begin{proposition}
We assume that 
\begin{eqnarray}
|\mathcal{Q}_n|=|\mathcal{Q}_0|=2d,~\mathcal{Q}_n\cap \mathcal{Q}_0=\emptyset,\nonumber\\
~\det(\mathcal{P}_n(0))\neq 0\mbox{ and }\det(\mathcal{P}_0(0))\neq 0\label{generalconditionsforQEPCEL}
\end{eqnarray}
where $n$ denotes the number of interacting particles. Then all the solutions $\mathbf{x}_s$ and $\mathbf{x}_j$ to (\ref{CELchoreoCTsumsimple}) and (\ref{CELchoreoCT}) respectively are of the shape (\ref{psdf}) with $K=2d$.
\label{prop3.1}
\end{proposition}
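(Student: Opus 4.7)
The plan is to decouple and solve the two equations sequentially: first the closed equation (\ref{CELchoreoCTsumsimple}) for the center-of-mass variable $\mathbf{x}_s$, and then the affine equation (\ref{CELchoreoCT}) for each individual coordinate $\mathbf{x}_j$, which uses the already-determined $\mathbf{x}_s$ as an exogenous forcing. Both steps reduce to the classical linear-ODE solvability result via the QEP recalled at the end of Section 3.1.

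For the first step, I observe that the operator acting on $\mathbf{x}_s$ in (\ref{CELchoreoCTsumsimple}) is exactly $\mathcal{P}_n(d/dt)$ with symbol $\mathcal{P}_n(\lambda)$ from (\ref{Jlambda}). The hypotheses $|\mathcal{Q}_n|=2d$ (simple spectrum) and $\det\mathcal{P}_n(0)\neq 0$ in (\ref{generalconditionsforQEPCEL}) are precisely those required to apply the QEP result. This yields
\[
\mathbf{x}_s(t) = \mathbf{u}_0^{(s)} + \sum_{\ell=1}^{2d} e^{\lambda_\ell t}\,\mathbf{u}_\ell^{(s)},\quad \mathbf{u}_0^{(s)}=\mathcal{P}_n(0)^{-1}(nJ_7),\ \lambda_\ell\in\mathcal{Q}_n,\ \mathbf{u}_\ell^{(s)}\in\ker\mathcal{P}_n(\lambda_\ell),
\]
which settles the claim for $\mathbf{x}_s$ with $K=2d$.

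For the second step, I substitute this $\mathbf{x}_s$ into the right-hand side of (\ref{CELchoreoCT}) and obtain
\[
-2J_3\dot{\mathbf{x}}_s+2J_4\mathbf{x}_s+J_7 = \bigl(2J_4\mathbf{u}_0^{(s)}+J_7\bigr) + \sum_{\ell=1}^{2d} e^{\lambda_\ell t}\,(2J_4-2\lambda_\ell J_3)\mathbf{u}_\ell^{(s)},
\]
so that (\ref{CELchoreoCT}) becomes $\mathcal{P}_0(d/dt)\mathbf{x}_j=\mathbf{f}(t)$ with $\mathbf{f}$ an exponential polynomial whose exponents lie in $\mathcal{Q}_n$. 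A particular solution is sought by an ansatz of matching shape, reducing identification to the algebraic systems $\mathcal{P}_0(0)\mathbf{v}_0^{(j)}=2J_4\mathbf{u}_0^{(s)}+J_7$ and $\mathcal{P}_0(\lambda_\ell)\mathbf{v}_\ell^{(j)}=(2J_4-2\lambda_\ell J_3)\mathbf{u}_\ell^{(s)}$. The first is solvable since $\det\mathcal{P}_0(0)\neq 0$, and the second for every $\ell$ because the disjointness $\mathcal{Q}_n\cap\mathcal{Q}_0=\emptyset$ makes $\mathcal{P}_0(\lambda_\ell)$ invertible; this is the non-resonance condition which prevents secular polynomial-times-exponential terms.

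The final step is to add the general homogeneous solution of $\mathcal{P}_0(d/dt)\mathbf{x}_j=0$, obtained once more from the QEP result applied to $\mathcal{P}_0$ (contributing $2d$ additional modes with exponents in $\mathcal{Q}_0$ and directions in the corresponding kernels). Assembling, $\mathbf{x}_j$ is a constant plus a sum of pure exponentials whose frequencies form a family of distinct nonzero points drawn from $\mathcal{Q}_n\cup\mathcal{Q}_0$, which is exactly the shape (\ref{psdf}). The main difficulty is not analytical but organizational: keeping track of two disjoint spectra and the two sources of exponential content in $\mathbf{x}_j$; the three conditions packed into (\ref{generalconditionsforQEPCEL}) are precisely what is needed to ensure that every linear system encountered along the way is uniquely solvable, and hence that no resonance destroys the expected form.
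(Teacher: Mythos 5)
Your proposal is correct and follows essentially the same route as the paper: first solve (\ref{CELchoreoCTsumsimple}) for $\mathbf{x}_s$ via the QEP result of Section 3.1 using $|\mathcal{Q}_n|=2d$ and $\det\mathcal{P}_n(0)\neq 0$, then treat (\ref{CELchoreoCT}) as $\mathcal{P}_0(d/dt)\mathbf{x}_j=\mathbf{f}$ with exponential forcing and use $\mathcal{Q}_n\cap\mathcal{Q}_0=\emptyset$ together with $\det\mathcal{P}_0(0)\neq 0$ to build a non-resonant particular solution, adding the $2d$ homogeneous modes from $\mathcal{Q}_0$. The only (immaterial) differences are that the paper additionally identifies the particular solution as $\frac{1}{n}\mathbf{x}_s(t)$, and your forcing coefficient $(2J_4-2\lambda_\ell J_3)$ tracks the single dot printed in (\ref{CELchoreoCT}) whereas the paper's $(J_4-\alpha^2 J_3)$ corresponds to the $\ddot{\mathbf{x}}_s$ coming from (\ref{CELchoreo}); neither affects the shape conclusion.
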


\begin{proof}
Since $\det(J_2+2(n-1)J_4)\neq 0$, we may define 
\begin{center}
$\mathbf{x}_{s,0}=-n(J_2+2(n-1)J_4)^{-1}J_7$
\end{center} 
and we have obviously $0\notin \mathcal{Q}_n$. The first condition $|\mathcal{Q}_n|=2d$ guarantees that the solution $\mathbf{x}_s$ to (\ref{CELchoreoCTsumsimple}) is of the shape (\ref{psdf}) with $K=2d$, i.e. 
\begin{equation}
\displaystyle \mathbf{x}_s(t)=\mathbf{x}_{s,0}+\sum_{\alpha\in \mathcal{Q}_n}e^{\alpha t}\mathbf{x}_{s,\alpha},
\label{solgenxsCEL}
\end{equation}
for some convenient vectors $\mathbf{x}_{s,\alpha}\in\ker\left(\mathcal{P}_n(\alpha)\right)$. Since $\mathcal{Q}_n\cap \mathcal{Q}_0=\emptyset$, the matrix $\mathcal{P}_0(\alpha)$ is invertible for each $\alpha\in \mathcal{Q}_n$ so we may define for $j\in\{1,\ldots,n\}$
\begin{center}
$\mathbf{x}_{j,\alpha}=2\mathcal{P}_0(\alpha)^{-1}(J_4-\alpha^2J_3)\mathbf{x}_{s,\alpha}$, $\forall \alpha\in \mathcal{Q}_n$.
\end{center}
Let us set $\mathbf{x}_{j,0}=-(J_2-2J_4)^{-1}(2J_4\mathbf{x}_{s,0}+J_7)$. Straightforward computations show that $\mathbf{x}_{j,0}=\frac{1}{n}\mathbf{x}_{s,0}$ and $\mathbf{x}_{j,\alpha}=\frac{1}{n}\mathbf{x}_{s,\alpha}$, $\forall \alpha\in \mathcal{Q}_n$. Therefore, we have proved that $\mathbf{x}_{j,0}+\sum_{\alpha\in \mathcal{Q}_n}e^{\alpha t}\mathbf{x}_{j,\alpha}=\frac{1}{n}\mathbf{x}_s(t)$ is a particular solution to (\ref{CELchoreoCT}). Hence, the general solution to (\ref{CELchoreoCT}) is given by the formula (\ref{psdf}) with $K=2d$, i.e.
\begin{equation}
\displaystyle \mathbf{x}_{j}(t)=\frac{1}{n}\mathbf{x}_s(t)+\sum_{\beta\in \mathcal{Q}_0}e^{\beta t}\mathbf{x}'_{j,\beta}
\label{solgenxlCEL}
\end{equation}
for some convenient vectors $\mathbf{x}'_{j,\beta}\in\ker\left(\mathcal{P}_0(\beta)\right)$.
\end{proof}

Let us consider first the well-posedness of the Dirichlet problem for C.E.L., i.e. (\ref{CELchoreoCTsumsimple}) and (\ref{CELchoreoCT}). We use some facts mentioned in \cite[Section 3]{TM} which are consequences of the existence of the Smith form for regular QEP, see also \cite{L1,GLR}. Let $\nu=0$ or $\nu=n$. Since $\mathcal{P}_\nu(\lambda)$ admits exactly $2d$ distinct roots, then $\dim\ker(\mathcal{P}_\nu(\lambda))=1$, $\forall\lambda\in \mathcal{Q}_\nu$ and the union of the various $\ker(\mathcal{P}_\nu(\lambda))$ spans $\mathbb{C}^d$. If $\nu=n$, we may decompose $\mathbf{x}_s(t_0)$ and $\mathbf{x}_s(t_f)$ on the family $\{\mathbf{x}_{s,\alpha}\}_{\alpha\in\mathcal{Q}_n}$ and we obtain a linear system of $2d$ equations w.r.t the $2d$ unknowns which are abscissas of $\mathbf{x}_{s,\alpha}$ along the linear straight lines $\ker(\mathcal{P}_n(\alpha))$, $\alpha\in \mathcal{Q}_n$. 
We proceed in a similar way when $\nu=0$ and $j\in\{1,\ldots,n\}$. We get a linear system of $2d$ equations by decomposing $\mathbf{x}_j(t_0)$ and $\mathbf{x}_j(t_f)$ w.r.t. the $2d$ unknowns which are the abscissas of  $\mathbf{x}'_{j,\beta}$, $\beta\in \mathcal{Q}_0$. Hence, the Dirichlet problem amounts to solving $n+1$ uncoupled square systems of size $2d$ (the very last one being useless due to the definition of $\mathbf{x}_s$). Each of the previous system is Cramer for almost all couple $(t_0,t_f)$. Indeed, the determinant of each system has the shape $P(e^{\lambda t_0},e^{\lambda t_f})_{\lambda\in \mathcal{Q}_\nu}$, $\nu=0$ or $\nu=n$, $P$ being a polynomial with coefficients depending on the coordinates of $\mathbf{x}_{s,\lambda}$ and $\mathbf{x}_{j,\lambda}$.

%%%%%%%%%%%%%%%%%%%%%%%%%%%%%%%%%%%%%%%%
% SOUS-SECTION 3.3
\subsection{The case of D.E.L.}

Let us extend the Proposition \ref{prop3.1} to the D.E.L. case. It should be emphasized here that D.E.L. do not admit in general a unique solution. Nevertheless, given a solution, there exists one and only one pseudo-periodic solution which agrees with the first one on a grid $\mathcal{G}_\varepsilon$. As well as the study of autonomous dynamic differential systems leads to QEP, the study of autonomous difference equations leads to transcendental eigenvalue problem associated to the following complicated matrix
\begin{eqnarray}
\tilde{\mathcal{P}}_\nu(\varepsilon,\lambda):=\displaystyle -(J_1+2(\nu-1)J_3)\sum_{\tiny
\begin{array}[t]{c}
-2N\leq k\leq 2N \\ 
-N\leq \ell\leq N \\ 
|k+\ell|\leq N
\end{array}
}\hspace{-0.2cm}\frac{\gamma_{k+\ell}\gamma_\ell}{\varepsilon^2}e^{k\lambda\varepsilon}-\nonumber\\
\displaystyle J_5\sum_{k=-N}^N\frac{1}{\varepsilon}(\gamma_k-\gamma_{-k})e^{k\lambda\varepsilon}-(J_2+2(\nu-1)J_4).
\label{tildeJlambda}
\end{eqnarray}
Let us introduce the following subsets 
\begin{center}
$\tilde{\mathcal{Q}}_\nu:=\left\{\lambda\in\mathbb{C}/\det(\tilde{\mathcal{P}}_\nu(\varepsilon,\lambda))=0\right\}$, $\forall \nu\in\mathbb{R}$.
\end{center}

\begin{proposition}
We assume that 
\begin{eqnarray}
|\tilde{\mathcal{Q}}_n|=|\tilde{\mathcal{Q}}_0|=4Nd,~\tilde{\mathcal{Q}}_n\cap \tilde{\mathcal{Q}}_0=\emptyset\nonumber\\
\det(\tilde{\mathcal{P}}_n(\varepsilon,0))\neq 0\mbox{ and }\det(\tilde{\mathcal{P}}_0(\varepsilon,0))\neq 0.
\label{generalconditionsforQEPDEL}
\end{eqnarray}
Then there exists solutions $\tilde{\mathbf{x}}_s$ and $\tilde{\mathbf{x}}_j$ to (\ref{DELchoreoCTsumsimple}) and (\ref{DELchoreoCT}) respectively of the shape (\ref{psdf}) inside the interval $[t_0+2N\varepsilon,t_f-2N\varepsilon]$.
\label{prop3.2}
\end{proposition}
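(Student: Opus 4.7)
The plan is to imitate the proof of Proposition \ref{prop3.1}, the extra technical input being that $\Box_\varepsilon$ is translation-invariant only on the interior subinterval $[t_0+2N\varepsilon,t_f-2N\varepsilon]$, where each characteristic function $\chi_{-j}$ in (\ref{ourbox}) equals $1$. On that subinterval a direct computation shows that, for any constant vector $\mathbf{v}\in\mathbb{C}^d$,
\[\Box_{-\varepsilon}\Box_\varepsilon(e^{\lambda t}\mathbf{v})=S_1(\lambda)\,e^{\lambda t}\mathbf{v},\qquad(\Box_\varepsilon-\Box_{-\varepsilon})(e^{\lambda t}\mathbf{v})=S_2(\lambda)\,e^{\lambda t}\mathbf{v},\]
where $S_1$ and $S_2$ are exactly the two sums appearing in (\ref{tildeJlambda}). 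First I would substitute the pseudo-periodic ansatz $\tilde{\mathbf{x}}_s(t)=\tilde{\mathbf{x}}_{s,0}+\sum_{\alpha\in\tilde{\mathcal{Q}}_n}e^{\alpha t}\tilde{\mathbf{x}}_{s,\alpha}$ into (\ref{DELchoreoCTsumsimple}). The $e^{\alpha t}$ component reduces to $-\tilde{\mathcal{P}}_n(\varepsilon,\alpha)\tilde{\mathbf{x}}_{s,\alpha}=0$, which is solved by picking $\tilde{\mathbf{x}}_{s,\alpha}\in\ker\tilde{\mathcal{P}}_n(\varepsilon,\alpha)$, while the constant component gives a non-degenerate linear equation for $\tilde{\mathbf{x}}_{s,0}$ thanks to $\det\tilde{\mathcal{P}}_n(\varepsilon,0)\neq 0$.

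Next, as in the continuous case, I seek the solution of (\ref{DELchoreoCT}) in the form $\tilde{\mathbf{x}}_j=\frac{1}{n}\tilde{\mathbf{x}}_s+\sum_{\beta\in\tilde{\mathcal{Q}}_0}e^{\beta t}\tilde{\mathbf{x}}'_{j,\beta}$. That $\frac{1}{n}\tilde{\mathbf{x}}_s$ is itself a particular solution follows from the straightforward algebraic identity
\[\tilde{\mathcal{P}}_0(\varepsilon,\lambda)-\tilde{\mathcal{P}}_n(\varepsilon,\lambda)=2n\bigl(J_3\,S_1(\lambda)+J_4\bigr),\]
read off (\ref{tildeJlambda}). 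Indeed, projecting onto each exponential mode and using $\tilde{\mathcal{P}}_n(\varepsilon,\alpha)\tilde{\mathbf{x}}_{s,\alpha}=0$ yields $\tilde{\mathcal{P}}_0(\varepsilon,\alpha)\tilde{\mathbf{x}}_{s,\alpha}=2n(J_3 S_1(\alpha)+J_4)\tilde{\mathbf{x}}_{s,\alpha}$, which matches termwise the coupling term $-2J_3\Box_{-\varepsilon}\Box_\varepsilon\tilde{\mathbf{x}}_s-2J_4\tilde{\mathbf{x}}_s$ on the right-hand side of (\ref{DELchoreoCT}); the same identity at $\lambda=0$ together with the prescribed value of $\tilde{\mathbf{x}}_{s,0}$ handles the constant mode. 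The disjointness $\tilde{\mathcal{Q}}_n\cap\tilde{\mathcal{Q}}_0=\emptyset$ then ensures that the homogeneous contributions $\tilde{\mathbf{x}}'_{j,\beta}\in\ker\tilde{\mathcal{P}}_0(\varepsilon,\beta)$ do not collide with the forced modes.

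The main obstacle is the boundary behaviour: for $t$ within $2N\varepsilon$ of $t_0$ or $t_f$, the shift operator $\Box_\varepsilon$ is truncated by the $\chi_{-j}$, so $e^{\lambda t}\mathbf{v}$ is no longer an eigenvector of $\Box_{-\varepsilon}\Box_\varepsilon$; this is precisely why the conclusion is phrased on the interior interval $[t_0+2N\varepsilon,t_f-2N\varepsilon]$ and why, in agreement with the remark preceding the statement, D.E.L. does not pin down the trajectory uniquely. A secondary technical point is the counting $|\tilde{\mathcal{Q}}_n|=|\tilde{\mathcal{Q}}_0|=4Nd$: after multiplication by $e^{2N\lambda\varepsilon}$, the matrix $\tilde{\mathcal{P}}_\nu(\varepsilon,\lambda)$ becomes polynomial of degree $4N$ in $z=e^{\lambda\varepsilon}$, so its determinant has degree $4Nd$ in $z$, and the simplicity of the roots ensures that each kernel $\ker\tilde{\mathcal{P}}_\nu(\varepsilon,\alpha)$ is one-dimensional, giving the ansatz exactly the right number of free parameters.
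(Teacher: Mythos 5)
Your construction is correct, and its skeleton is the paper's: the exponential ansatz, the transcendental matrices $\tilde{\mathcal{P}}_n$ and $\tilde{\mathcal{P}}_0$, kernel vectors for the oscillatory modes, invertibility at $\lambda=0$ for the constant term, $\frac{1}{n}\tilde{\mathbf{x}}_s$ as a particular solution of (\ref{DELchoreoCT}), and homogeneous contributions indexed by $\tilde{\mathcal{Q}}_0$, all on the interior interval where the characteristic functions equal $1$. Where you genuinely diverge is in the mechanism: you verify the ansatz directly through the eigenrelations $\Box_{-\varepsilon}\Box_\varepsilon e^{\lambda t}=S_1(\lambda)e^{\lambda t}$ and $(\Box_\varepsilon-\Box_{-\varepsilon})e^{\lambda t}=S_2(\lambda)e^{\lambda t}$, and you justify that $\frac{1}{n}\tilde{\mathbf{x}}_s$ solves (\ref{DELchoreoCT}) by the one-line identity $\tilde{\mathcal{P}}_0(\varepsilon,\lambda)-\tilde{\mathcal{P}}_n(\varepsilon,\lambda)=2n\bigl(J_3S_1(\lambda)+J_4\bigr)$, whereas the paper first restricts $\tilde{\mathbf{x}}_s$ and $\tilde{\mathbf{x}}_j$ to the grids $\mathcal{G}_{t_j,\varepsilon}$, invokes the theory of constant-coefficient linear recurrences (plus a Vandermonde independence argument) to identify the admissible phases, computes $\tilde{\mathbf{x}}_{j,0}$ and $\tilde{\mathbf{x}}_{j,\alpha}$ explicitly by inverting $\tilde{\mathcal{P}}_0(\varepsilon,\cdot)$, checks afterwards that they equal $\frac{1}{n}\tilde{\mathbf{x}}_{s,0}$ and $\frac{1}{n}\tilde{\mathbf{x}}_{s,\alpha}$, and only then extends pseudo-periodically off the grid. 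For the bare existence claim of Proposition \ref{prop3.2} your shortcut is sufficient and cleaner; what the paper's longer route buys is the characterization of the \emph{general} solution on each grid (hence the role of $|\tilde{\mathcal{Q}}_n|=|\tilde{\mathcal{Q}}_0|=4Nd$ as the order of the recurrence), which is what is reused later for the uniqueness, Dirichlet and periodicity discussions; your closing remark on the degree count and one-dimensional kernels recovers part of that, though you only need it for parameter counting, not for existence. One bookkeeping caveat: in matching the constant mode you should take the $J_6$ sign from (\ref{DELchoreo}) (the r.h.s. there is $-(\Box_{-\varepsilon}J_6+J_7)$), since the signs of the $J_6$ terms displayed in (\ref{DELchoreoCTsumsimple}) and (\ref{DELchoreoCT}) are not mutually consistent as printed; with that convention your constant-mode computation closes exactly.
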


\begin{proof}
The two last conditions (\ref{generalconditionsforQEPDEL}) imply that $0\notin\tilde{\mathcal{Q}}_n$. If we set $\zeta=e^{\lambda\epsilon}$, we see that the quantity $\tilde{\mathcal{P}}_n(\varepsilon,\lambda)\zeta^{2N}$ is a polynomial w.r.t. $\zeta$ of degree $4N$. So the equation $\zeta^{2Nd}\det(\tilde{\mathcal{P}}_n(\varepsilon,\lambda))=0$ gives rise to a polynomial equation w.r.t. $\zeta$ of degree $4Nd$.\\
Computation of the l.h.s. of (\ref{DELchoreoCTsumsimple}) is performed by using (\ref{ourbox}) and \cite[Lemma 6.1]{RS2}. We find 
\begin{equation*}
\sum_{\tiny
\begin{array}[t]{c}
-2N\leq k\leq 2N \\ 
-N\leq \ell\leq N \\ 
|k+\ell|\leq N
\end{array}
}\hspace{-0.2cm}\frac{1}{\varepsilon^2}\gamma_{k+\ell}\gamma_\ell\chi_\ell(t)\chi_{-k}(t)(J_1+2(n-1)J_3)\tilde{\mathbf{x}}_s(t+k\varepsilon )+(J_2+2(n-1)J_4)\tilde{\mathbf{x}}_s(t)
\end{equation*}
\vspace{-0.3cm}
\begin{equation}
\hspace{1.7cm}+\sum_{k=-N}^N\hspace{-0.2cm}\chi_{-k}(t)\frac{1}{\varepsilon}(\gamma_k-\gamma_{-k})J_5\tilde{\mathbf{x}}_s(t+k\varepsilon )+n\Box_{-\varepsilon}1J_6+nJ_7=0.
\label{DELchoreoCTxsexplicit}
\end{equation}
When $t$ lies in the interval $[t_0+2N\varepsilon,t_f-2N\varepsilon]$, the various characteristic functions $\chi_k(t)$ occuring in (\ref{DELchoreoCTxsexplicit}) are equal to $1$. Next, we define for $t_j\in[t_0,t_f]$ the grid $\mathcal{G}_{t_j,\varepsilon}=\{t_j+m\varepsilon ,m\in\mathbb{N}\}\cap[t_0,t_f]$. So, both restrictions of $\tilde{\mathbf{x}}_s(t)$ and $\tilde{\mathbf{x}}_j(t)$ to $\mathcal{G}_{t_j,\varepsilon}$ are vector-valued sequences satisfying linear constant matricial recurrences. The classical theory of those systems \cite{L1,L2,TM} shows that, provided the characteristic equation admits a number of roots equal to the order of the recurrence, $\tilde{\mathbf{x}}_s(t)$ has the shape 
$\tilde{\mathbf{x}}_s(t)=\tilde{\mathbf{x}}_{s,0}+\sum_{\lambda}e^{\lambda t}\tilde{\mathbf{x}}_{s,\lambda}$, $\forall t\in\mathcal{G}_{t_j,\varepsilon}$,
for some vectors $\tilde{\mathbf{x}}_{s,\lambda}$ and $\tilde{\mathbf{x}}_{s,0}$. Here, the order of recurrence is equal to $4Nd$ and it is also equal to the number of roots of the characteristic equation which is $|\tilde{\mathcal{Q}}_n|$. So we may plug the previous formula into (\ref{DELchoreoCTxsexplicit}) and we find
\begin{equation}
\sum_{\lambda}e^{\lambda t}\tilde{\mathcal{P}}_n(\varepsilon,\lambda)\tilde{\mathbf{x}}_{s,\lambda}+\tilde{\mathcal{P}}_n(\varepsilon,0)\tilde{\mathbf{x}}_{s,0}=n\Box_{-\varepsilon}1J_6+nJ_7.
\label{xsinDEL}
\end{equation}
Because the values of the function $e^{\lambda t}$, on the grid $\mathcal{G}_{t_j,\varepsilon}$, are the numbers $e^{\lambda t_j}\zeta^m$ with $m=\frac{t-t_j}{\varepsilon}\in\mathbb{N}$, all the functions $e^{\lambda t}$ on this grid are linearly independent. Indeed, a linear relationship between these functions would give rise to a Vandermonde determinant w.r.t. to the associated distinct numbers $\zeta$. Therefore, every non-constant function of $t$ must vanish in (\ref{xsinDEL}), which means that the ``phases" $\lambda$ occuring in $\tilde{\mathbf{x}}_s(t)$ are exactly the roots $\alpha$ of $\tilde{\mathcal{Q}}_n$. By assumption, $\tilde{\mathcal{P}}_n(\varepsilon,0)$ is invertible and $\tilde{\mathcal{P}}_n(\varepsilon,\alpha)$ is singular. Thus, we may choose $\tilde{\mathbf{x}}_{s,0}=n\tilde{\mathcal{P}}_n(\varepsilon,0)^{-1}(J_7+\Box_{-\varepsilon}1J_6)$ and $\tilde{\mathbf{x}}_{s,\lambda}\in \ker(\tilde{\mathcal{P}}_n(\varepsilon,\lambda))$. Finally, we have determined the general solution $\tilde{\mathbf{x}}_s$ to (\ref{DELchoreoCTsumsimple}) on the grid $\mathcal{G}_{t_j,\varepsilon}$, namely
\begin{equation}
\tilde{\mathbf{x}}_s(t)=\tilde{\mathbf{x}}_{s,0}+\sum_{\alpha\in\tilde{Q}_n}e^{\alpha t}\tilde{\mathbf{x}}_{s,\alpha}.
\label{solgenxsDEL}
\end{equation} 

Now, let us deal with $\tilde{\mathbf{x}}_j(t)$. This function satisfies the following functional equation, which is similar to (\ref{DELchoreoCTxsexplicit})
\begin{center}
$\displaystyle -\sum_{\tiny
\begin{array}[t]{c}
-2N\leq k\leq 2N \\ 
-N\leq \ell\leq N \\ 
|k+\ell|\leq N
\end{array}
}\hspace{-0.2cm}\frac{1}{\varepsilon^2}\gamma_{k+\ell}\gamma_\ell\chi_\ell(t)\chi_{-k}(t)(J_1-2J_3)\tilde{\mathbf{x}}_j(t+k\varepsilon)$\\
$\displaystyle -(J_2-2J_4)\tilde{\mathbf{x}}_j(t)-J_5\sum_{k=-N}^N\hspace{-0.2cm}\chi_{-k}(t)\frac{1}{\varepsilon}(\gamma_k-\gamma_{-k})\tilde{\mathbf{x}}_j(t+k\varepsilon )=$
\end{center}
\begin{equation}
2J_3\Box_{-\varepsilon}\Box_\varepsilon\tilde{\mathbf{x}}_s(t)+2J_4\mathbf{x}_s(t)-\Box_{-\varepsilon}1J_6+J_7.
\label{DELchoreoCTxlexplicit}
\end{equation}
Let us construct a particular solution to (\ref{DELchoreoCTxlexplicit}) for $t\in\mathcal{G}_{t_j,\varepsilon}$. By using the previous expression for $\tilde{\mathbf{x}}_s(t)$, the r.h.s. of (\ref{DELchoreoCTxlexplicit}) may be rewritten as
\begin{center}
$\displaystyle \Box_{-\varepsilon}1J_6-J_7-2(\Box_{-\varepsilon}\Box_\varepsilon 1J_3+J_4)\tilde{\mathbf{x}}_{s,0}-2\sum_{\alpha\in\tilde{\mathcal{Q}}_n}e^{\alpha  t}(J_3+\theta_\alpha J_4)\tilde{\mathbf{x}}_{s,\alpha}$
\end{center}
where $\theta_\alpha=e^{-\alpha t}\Box_{-\varepsilon}\Box_\varepsilon e^{\alpha t}=\frac{1}{\varepsilon^2}\sum_{k,j}\gamma_{k+j}\gamma_je^{k\alpha\varepsilon}$. Now, if we substitute $\displaystyle \tilde{\mathbf{x}}_j(t)=\tilde{\mathbf{x}}_{j,0}+\sum_{\alpha\in\tilde{\mathcal{Q}}_n}e^{\alpha t}\tilde{\mathbf{x}}_{j,\alpha}$ in (\ref{DELchoreoCTxlexplicit}), we note that the l.h.s. of (\ref{DELchoreoCTxlexplicit}) is equal to
\begin{center}
$\displaystyle \tilde{\mathcal{P}}_0(\varepsilon,0)\tilde{\mathbf{x}}_{j,0}+\sum_{\alpha\in\tilde{\mathcal{Q}}_n}e^{\alpha t}\tilde{\mathcal{P}}_0(\varepsilon,\alpha)\tilde{\mathbf{x}}_{j,\alpha}$.
\end{center}
Because $\det(\tilde{\mathcal{P}}_0(\varepsilon,0))\neq 0$, we may define 
\begin{center}
$\displaystyle \tilde{\mathbf{x}}_{j,0}=\tilde{\mathcal{P}}_0(\varepsilon,0)^{-1}(2(\Box_{-\varepsilon}\Box_\varepsilon 1 J_3+J_4)\tilde{\mathbf{x}}_{s,0}-\Box_{-\varepsilon} 1 J_6+J_7)$.
\end{center}
Since $\tilde{\mathcal{Q}}_n\cap\tilde{\mathcal{Q}}_0=\emptyset$, the matrix $\tilde{\mathcal{P}}_0(\varepsilon,\alpha)$ is invertible for each $\alpha\in\tilde{\mathcal{Q}}_n$ and we may set 
\begin{center}
$\displaystyle\tilde{\mathbf{x}}_{j,\alpha}=2\tilde{\mathcal{P}}_0(\varepsilon,\alpha)^{-1}(J_3+\theta_\alpha J_4)\tilde{\mathbf{x}}_{s,\alpha}$.
\end{center}
Similarly to the case of C.E.L., we readily prove that $\tilde{\mathbf{x}}_{j,0}=\frac{1}{n}\tilde{\mathbf{x}}_{s,0}$ and $\tilde{\mathbf{x}}_{j,\alpha}=\frac{1}{n}\tilde{\mathbf{x}}_{s,\alpha}$, $\forall \alpha\in \tilde{\mathcal{Q}}_n$. At last, since $|\tilde{\mathcal{Q}}_0|=4Nd$, we conclude that the general solution to (\ref{DELchoreoCTxlexplicit}) on the grid $\mathcal{G}_{t_j,\varepsilon}$ is given by
\begin{equation}
\tilde{\mathbf{x}}_{j}(t)=\frac{1}{n}\tilde{\mathbf{x}}_s(t)+\sum_{\beta\in\tilde{\mathcal{Q}}_0}e^{\beta t}\tilde{\mathbf{x}}'_{j,\beta}
\label{solgenxlDEL}
\end{equation}
for some convenient vectors $\tilde{\mathbf{x}}'_{j,\beta}$ in $\ker(\tilde{\mathcal{P}}_0(\varepsilon,\beta))$.

If we drop the requirement that $t$ lies in $\mathcal{G}_{t_j,\varepsilon}$, i.e. if we remove the condition $\frac{t-t_j}{\varepsilon}\in\mathbb{N}$, the functions $t\mapsto \tilde{\mathbf{x}}_s(t)$ and $t\mapsto \tilde{\mathbf{x}}_j(t)$ may be extended by the preceding formulas to pseudo-periodic functions $t\mapsto \tilde{\mathbf{x}}_s(t)$ and $t\mapsto \tilde{\mathbf{x}}_j(t)$ respectively. Since the equations of motion are autonomous (independent w.r.t. $t$), these $n+1$ functions are solutions to (\ref{DELchoreoCTsumsimple}) and (\ref{DELchoreoCT}) respetively. Therefore, these functions are of the shape (\ref{psdf}) with $K=4Nd$ and $K=8Nd$ respectively and the proof is complete.
\end{proof}

\begin{remark}\rm
Solving D.E.L. with Dirichlet conditions leads to $n+1$ uncoupled linear systems, one of size $\sum_{\alpha \in \tilde{\mathcal{Q}}_n}\dim\ker(\tilde{ \mathcal{P}}_\nu(\varepsilon,\alpha))$ and the $n$ others of size $\sum_{\beta\in\tilde{ \mathcal{Q}}_0}\dim\ker(\tilde{\mathcal{P}}_0(\varepsilon,\beta))$. If those systems are Cramer, then the pseudo-periodic solution to D.E.L. exists and is unique.
\end{remark}

%%%%%%%%%%%%%%%%%%%%%%%%%%%%%%%%%%%%%%%%
% SECTION 4
\section{Convergence issues}

Let us fix $\nu,\gamma_{-N},\ldots,\gamma _N,J_1,\ldots ,J_5$. Motivated by studying the convergence of the solutions to D.E.L. to the respective solutions to C.E.L., it is natural at first sight to ask if the matrix-valued function $\tilde{\mathcal{P}}_\nu(\varepsilon,\lambda)$ tends to $\mathcal{P}_\nu(\lambda)$ locally uniformly w.r.t. $\lambda\in\mathbb{C}$ as $\varepsilon$ tends to 0. Next, we recall the Hausdorff metric  
\begin{center}
$d_H(F_1,F_2):=\max \left\{\underset{x\in F_1}{\max}~\underset{y\in F_2}{\min}|x-y|,\underset{x\in F_2}{\max}~\underset{y\in F_1}{\min}|x-y|\right\}$,
\end{center}
defined for all nonempty finite subsets $F_1,F_2\subset\mathbb{C}$. Thus, we naturally investigate the convergence, in this sense, of 
\begin{center}
$\tilde{\mathcal{Q}}_\nu =(\det (\tilde{\mathcal{P}}_\nu(\varepsilon,.)))^{-1}\{0\}$ to $\mathcal{Q}_\nu =(\det(\mathcal{P}_\nu(.)))^{-1}\{0\}$
\end{center}
as $\varepsilon$ tends to 0. In order to prove this result, we shall need the following Theorem of Cucker and Corbalan \cite{CC}.
\begin{theorem}
Let $P(X)=a_0X^m+a_1X^{m-1}+\ldots+a_m\in\mathbb{C}[X]\backslash\{0\}$. Let $\xi_1,\ldots,\xi_r$ be its roots in $\mathbb{C}$, with multiplicities $\mu_1,\ldots,\mu_r$ respectively, and let $\mathcal{B}_1,\ldots,\mathcal{B}_r$ be disjoint disks centered at $\xi_1,\ldots,\xi_r$ with radii $\varepsilon_0$ and contained in the open disk centered at 0 with radius $1/\varepsilon_0$. Then, there is a $\delta\in\mathbb{R}^+$, such that, if $|b_j-a_j|<\delta$ for every $0\leq j\leq m$, then the polynomial $Q(X)=b_0X^m+b_1X^{m-1}+\ldots+b_m$ has $\mu_j$ roots (counted with multiplicity) in each $\mathcal{B}_j$ and $\deg(Q)-\deg(P)$ roots with absolute value greater than $1/\varepsilon_0$.
\label{thmCC}
\end{theorem}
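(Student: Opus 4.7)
The plan is to establish both conclusions by two applications of Rouch\'e's theorem: first on the boundary of each small disk $\mathcal{B}_j$ to count the roots of $Q$ that lie near the $\xi_j$, and second on a small circle around the origin for the ``reverse'' polynomials $P^*(X):=X^mP(1/X)=a_0+a_1X+\ldots+a_mX^m$ and the analogous $Q^*$, in order to account for the roots of $Q$ with very large modulus. Throughout, I will exploit the fact that $P^*$ and $Q^*$ have coefficients that are merely the coefficients of $P$ and $Q$ in reverse order, so a bound $|b_i-a_i|<\delta$ for every $i$ controls both $|Q-P|$ and $|Q^*-P^*|$ simultaneously.

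For the first step, I would observe that on the compact disk $K:=\{|z|\leq 1/\varepsilon_0\}$, which contains every $\partial\mathcal{B}_j$, one has the uniform estimate $|Q(z)-P(z)|\leq\delta\sum_{i=0}^m|z|^{m-i}\leq\delta(m+1)/\varepsilon_0^m$. Since $P$ has no zero on $\partial\mathcal{B}_j$ (the only zero of $P$ in $\mathcal{B}_j$ is $\xi_j$, the center), the minimum $m_j:=\min_{z\in\partial\mathcal{B}_j}|P(z)|$ is strictly positive. Choosing $\delta$ small enough to force $\delta(m+1)/\varepsilon_0^m<\min_j m_j$ guarantees $|Q-P|<|P|$ on every $\partial\mathcal{B}_j$; Rouch\'e's theorem then yields exactly $\mu_j$ zeros of $Q$, counted with multiplicity, inside each $\mathcal{B}_j$.

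For the second step, set $k:=\deg(P)$, so that $a_0=\cdots=a_{m-k-1}=0$ and $a_{m-k}\neq 0$. The reverse polynomial $P^*$ has a zero of multiplicity exactly $m-k$ at the origin, and its nonzero zeros are reciprocals of the nonzero zeros of $P$. Since every zero of $P$ lies strictly inside $|z|<1/\varepsilon_0$ (being one of the centers $\xi_j$), the nonzero zeros of $P^*$ satisfy $|w|>\varepsilon_0$ strictly. Consequently $m_0:=\min_{|w|=\varepsilon_0}|P^*(w)|>0$, and the estimate $|Q^*(w)-P^*(w)|\leq\delta(m+1)$ on $|w|=\varepsilon_0$ (using the consequence $\varepsilon_0<1$ of the inclusion $\mathcal{B}_j\subset\{|z|<1/\varepsilon_0\}$) lets me shrink $\delta$ once more so that $|Q^*-P^*|<|P^*|$ on this circle. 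Rouch\'e then gives exactly $m-k$ zeros of $Q^*$, with multiplicity, in the open disk $|w|<\varepsilon_0$. Among these, the origin is a zero of $Q^*$ of multiplicity precisely $m-\deg(Q)$, corresponding to the vanishing of the top coefficients of $Q$; the remaining $(m-k)-(m-\deg Q)=\deg(Q)-\deg(P)$ zeros are nonzero points $w$ with $|w|<\varepsilon_0$, whose reciprocals $z=1/w$ are exactly the roots of $Q$ with $|z|>1/\varepsilon_0$ predicted by the statement.

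The principal subtlety I expect lies in the multiplicity bookkeeping at the origin in this last step: the ``spurious'' zero of $Q^*$ at $0$, of multiplicity $m-\deg(Q)$, must be carefully separated from the small nonzero zeros of $Q^*$ that encode the large roots of $Q$; and one must verify that the Rouch\'e inequality on $|w|=\varepsilon_0$ cannot be spoiled by any root of $P^*$ on this circle, which is guaranteed by the strict inclusion $\mathcal{B}_j\subset\{|z|<1/\varepsilon_0\}$. Once these points are handled, the whole argument reduces to standard uniform Rouch\'e estimates on the fixed compact set $K$, and the final $\delta$ is obtained as the minimum of the finitely many thresholds produced along the way.
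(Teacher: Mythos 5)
Your argument is correct, but there is nothing in the paper to compare it against: the statement is quoted as an external result of Cucker and Corbal\'an (reference [CC]), cited precisely because it extends the classical continuity-of-roots theorems of Weber and Ostrowski to perturbations that change the degree, and the paper reproduces no proof. Your route is the standard self-contained complex-analytic one, and it does go through. On each $\partial\mathcal{B}_j$ the minimum of $|P|$ is strictly positive because the only zeros of $P$ are the centers of the pairwise disjoint disks, the uniform bound $|Q-P|\le\delta(m+1)\varepsilon_0^{-m}$ on $\{|z|\le 1/\varepsilon_0\}$ is legitimate since the hypothesis that a disk of radius $\varepsilon_0$ sits inside the disk of radius $1/\varepsilon_0$ forces $\varepsilon_0<1$, and Rouch\'e then yields exactly $\mu_j$ roots of $Q$ in each $\mathcal{B}_j$. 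The handling of the large roots via the reversed polynomials is the delicate part and you do it correctly: $P^*$ vanishes at the origin to order exactly $m-\deg P$ and has no other zero in $\{|w|\le\varepsilon_0\}$ (reciprocals of roots of $P$ have modulus $>\varepsilon_0$ because all $\xi_j$ lie strictly inside $|z|<1/\varepsilon_0$), so Rouch\'e on $|w|=\varepsilon_0$ gives exactly $m-\deg P$ zeros of $Q^*$ there, of which the origin absorbs exactly $m-\deg Q$, leaving $\deg Q-\deg P$ nonzero small zeros whose reciprocals are precisely the roots of $Q$ of modulus $>1/\varepsilon_0$; note in passing that the strict Rouch\'e inequality also excludes $Q\equiv 0$ and shows $\deg Q\ge\deg P$, so the multiplicity bookkeeping is consistent. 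In short: the paper's ``approach'' is citation of an alternate, non-Rouch\'e proof in the literature, while yours is the classical Rouch\'e argument, equally valid and arguably more transparent for readers familiar with complex analysis.
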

It extends older results of Weber and Ostrowski to the case of perturbation of polynomials of distinct degrees. Hence, we must exclude the $4Nd-2d$ divergent roots, as $\varepsilon$ tends to 0, from the set $\tilde{\mathcal{Q}}_\nu$ to prove the second result of convergence mentioned above.
\begin{theorem}
We keep the assumptions (\ref{generalconditionsforQEPCEL}) and (\ref{generalconditionsforQEPDEL}) of Propositions \ref{prop3.1} and \ref{prop3.2}. We assume that $\Box _\varepsilon $ defined by (\ref{ourbox}) is such that $\gamma _{-N}\gamma _N\neq 0$ and 
\begin{equation}
\left\{\begin{array}{l}\Box _\varepsilon 1=0\\
\Box_\varepsilon t=1
\end{array}\right.,~\forall t\in
[t_0+2N\varepsilon ,t_f-2N\varepsilon ].  \label{condsurboxCV}
\end{equation}
Let $\nu\in\mathbb{R}$ and $K$ any compact neighbourhood of $\mathcal{Q}_\nu$. Then, when $\varepsilon$ tends to 0, 
$\tilde{\mathcal{P}}_\nu(\varepsilon,\lambda)$ tends to $\mathcal{P}_\nu (\lambda )$ locally uniformly in $\mathbb{C}$ and $\tilde{\mathcal{Q}}_\nu\cap K$ tends to $\mathcal{Q}_\nu$ in the Hausdorff sense.
\end{theorem}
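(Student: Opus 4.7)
The plan has two stages: first establish locally uniform matrix convergence $\tilde{\mathcal{P}}_\nu(\varepsilon,\cdot)\to\mathcal{P}_\nu$, then promote this to Hausdorff convergence of the zero sets by a Rouch\'e / Cucker--Corbalan argument.

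For the first stage, I would introduce the entire function $\Phi(z):=\sum_{j=-N}^N\gamma_j e^{jz}$. Evaluated on the interval where every characteristic function $\chi_j(t)$ equals $1$, the constraints (\ref{condsurboxCV}) amount to matching coefficients of $t/\varepsilon$ and of the constant, yielding the two moment identities $\sum_j\gamma_j=0$ and $\sum_j j\gamma_j=1$, equivalently $\Phi(0)=0$ and $\Phi'(0)=1$. A Taylor expansion then gives $\Phi(\lambda\varepsilon)/\varepsilon\to\lambda$ locally uniformly in $\lambda$. After reindexing the double sum in (\ref{tildeJlambda}) via $m=k+\ell$, the constraints $|m|,|\ell|\leq N$ decouple the indices and the double sum factors as $\Phi(\lambda\varepsilon)\Phi(-\lambda\varepsilon)/\varepsilon^2$, tending to $-\lambda^2$; the single sum involving $J_5$ collapses to $(\Phi(\lambda\varepsilon)-\Phi(-\lambda\varepsilon))/\varepsilon$, tending to $2\lambda$. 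Substituting into (\ref{tildeJlambda}) proves $\tilde{\mathcal{P}}_\nu(\varepsilon,\lambda)\to\mathcal{P}_\nu(\lambda)$ uniformly on every compact subset of $\mathbb{C}$, and since the determinant is polynomial in the matrix entries, $\det(\tilde{\mathcal{P}}_\nu(\varepsilon,\cdot))\to\det(\mathcal{P}_\nu(\cdot))$ locally uniformly as well.

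For the second stage I would pass to the variable $\zeta=e^{\lambda\varepsilon}$: then $Q_\varepsilon(\zeta):=\zeta^{2Nd}\det(\tilde{\mathcal{P}}_\nu(\varepsilon,\lambda))$ is a polynomial of degree exactly $4Nd$, since its leading coefficient is proportional (after dividing by $\varepsilon^{2d}$) to $(\gamma_{-N}\gamma_N)^d\det(J_1+2(\nu-1)J_3)$, nonzero by the assumption $\gamma_{-N}\gamma_N\neq 0$ together with the fact that $\mathcal{Q}_\nu$ contains $2d$ points (which forces $\det(J_1+2(\nu-1)J_3)\neq 0$). Around each simple root $\lambda_0\in\mathcal{Q}_\nu$ I apply Theorem \ref{thmCC} (or directly Rouch\'e's theorem) to the locally uniformly convergent family $\det(\tilde{\mathcal{P}}_\nu(\varepsilon,\cdot))$ on a small disk $D_0\subset K$ containing no other element of $\mathcal{Q}_\nu$: for $\varepsilon$ small enough, $D_0$ contains exactly one root of $\det(\tilde{\mathcal{P}}_\nu(\varepsilon,\cdot))$, which then converges to $\lambda_0$.

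To finish, I would rule out spurious roots in $K\setminus\bigcup_{\lambda_0\in\mathcal{Q}_\nu}D_0$. On that compact, $|\det(\mathcal{P}_\nu)|$ is bounded below by some $c>0$, so locally uniform convergence forces $\det(\tilde{\mathcal{P}}_\nu(\varepsilon,\cdot))$ to be nonvanishing there for small $\varepsilon$. All roots in $K$ therefore lie in $\bigcup D_0$, and shrinking the radii proves $d_H(\tilde{\mathcal{Q}}_\nu\cap K,\mathcal{Q}_\nu)\to 0$. The main obstacle is to reconcile the polynomial count $4Nd$ of roots of $Q_\varepsilon$ in the $\zeta$-plane with the target count $2d=|\mathcal{Q}_\nu|$ in $K$; this is precisely the role of the ``divergent roots'' clause in Theorem \ref{thmCC}, since the $4Nd-2d$ extra roots of $Q_\varepsilon$ have $\zeta$-images bounded away from $1$, so their $\lambda$-preimages leave $K$ as $\varepsilon\to 0$.
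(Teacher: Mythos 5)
Your argument is correct, and it diverges from the paper's proof in instructive ways. For the first stage you compute directly with the symbol $\Phi(z)=\sum_j\gamma_j e^{jz}$: the identities $\Phi(0)=0$, $\Phi'(0)=1$ coming from (\ref{condsurboxCV}), the factorization of the double sum in (\ref{tildeJlambda}) as $\Phi(\lambda\varepsilon)\Phi(-\lambda\varepsilon)/\varepsilon^2$ after the substitution $m=k+\ell$, and the collapse of the $J_5$ sum to $(\Phi(\lambda\varepsilon)-\Phi(-\lambda\varepsilon))/\varepsilon$ are all exact, and they reproduce self-containedly the two limits (\ref{conv1})--(\ref{conv2}) that the paper instead imports from \cite{RS2} via a composition argument; same content, but your version is cleaner and independent of the external reference. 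For the second stage you genuinely depart from the paper: instead of comparing coefficients of the degree-$4Nd$ polynomial in $\zeta=e^{\lambda\varepsilon}$ and invoking Theorem \ref{thmCC} (whose application in the paper mixes the $\zeta$- and $\lambda$-variables somewhat delicately), you work in the $\lambda$-plane with a Hurwitz/Rouch\'e argument on small disks around each point of $\mathcal{Q}_\nu$ plus a compactness lower bound $|\det\mathcal{P}_\nu|\geq c>0$ on $K\setminus\bigcup D_0$ to exclude spurious zeros; this yields both nonemptiness of $\tilde{\mathcal{Q}}_\nu\cap K$ and the inclusion $\tilde{\mathcal{Q}}_\nu\cap K\subset\bigcup D_0$, hence the Hausdorff convergence, without any bookkeeping of the $4Nd-2d$ divergent roots. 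Two minor caveats, neither affecting validity: your claim that $\det(J_1+2(\nu-1)J_3)\neq 0$ is ``forced'' by $|\mathcal{Q}_\nu|=2d$ uses a hypothesis the theorem only grants for $\nu\in\{0,n\}$, and your closing remark that the $4Nd-2d$ extra roots have $\zeta$-images bounded away from $1$ is a heuristic that would need proof; but since your disk-plus-lower-bound argument already settles everything inside $K$, the exact $\zeta$-degree count and the fate of the divergent roots are not needed for the stated conclusion.
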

\begin{proof} 
The assumptions (\ref{condsurboxCV}) are equivalent to the algebraic
equations 
\[
\sum_{k=-N}^N\gamma _k=0\mbox{ and }\frac 12\sum_{k=-N}^Nk(\gamma _k-\gamma_{-k})=1\label{conditions}
\]
since the characteristic functions are equal to 1 in $[t_0+2N\varepsilon
,t_f-2N\varepsilon ]$. In Theorem 6.1 of \cite{RS2}, we have proved that
these conditions are themselves equivalent to one or the other statements
\begin{itemize}
\item  for all $\mathbf{x}\in \mathcal{C}^2([t_0,t_f])$, $\displaystyle \lim_{\varepsilon \to 0}\Box _\varepsilon \mathbf{x}(t)=\dot {\mathbf{x}}(t)$ locally uniformly in $]t_0,t_f[$,
\item  for all $\mathbf{x}\in \mathcal{C}^2([t_0,t_f])$, $\displaystyle\lim_{\varepsilon \to 0}\Box _{-\varepsilon }\mathbf{x}(t)=-\dot {\mathbf{x}}(t)$ locally uniformly in $]t_0,t_f[$.
\end{itemize}
The mode of convergence means that for all $\delta>0$, $\Box_{\pm\varepsilon}\mathbf{x}(t)$ tends uniformly to $\pm\dot{\mathbf{x}}(t)$ in $[t_0+\delta,t_f-\delta]$ when $\varepsilon$ tends to 0. This convergence can not be improved since the functions $t\mapsto \Box_\varepsilon 1$ and $t\mapsto \Box_\varepsilon t$ are equal to 0 and 1 respectively only in the interval $[t_0+2N\varepsilon,t_f-2N\varepsilon]$. By composition of these properties we obtain
\begin{equation}
\displaystyle-\frac 1{e^{\lambda t}}\Box _{-\varepsilon }\Box _\varepsilon
e^{\lambda t}=-\sum_{\tiny
\begin{array}[t]{c}
-2N\leq k\leq 2N \\ 
-N\leq \ell \leq N \\ 
|k+\ell |\leq N
\end{array}
}\hspace{-0.2cm}\frac 1{\varepsilon ^2}\gamma _{k+\ell }\gamma _\ell \chi
_\ell (t)\chi _{-k}(t)e^{k\lambda \varepsilon }\underset{\varepsilon
\rightarrow 0}{\longrightarrow }\lambda ^2
\label{conv1}
\end{equation}
and
\begin{equation}
\displaystyle \frac{1}{e^{\lambda t}}\left(\Box_{\varepsilon}e^{\lambda
t}-\Box_{-\varepsilon}e^{\lambda t}\right)=\sum_{k=-N}^N\hspace{-0.2cm}%
\chi_{-k}(t)\frac{1}{\varepsilon}(\gamma_k-\gamma_{-k})e^{k\lambda%
\varepsilon}\underset{\varepsilon\rightarrow 0}{\longrightarrow}2\lambda.
\label{conv2}
\end{equation}
We see easily that the functions $e^{-\lambda t}\Box _{-\varepsilon}\Box _\varepsilon e^{\lambda t}$ and $e^{-\lambda t}(\Box _\varepsilon
e^{\lambda t}-\Box _{-\varepsilon }e^{\lambda t})$ defined at $\varepsilon =0$ by the respective values $-\lambda ^2$ and $2\lambda $ are continuous w.r.t. $\varepsilon $. The quantities in both sides in each equation are obviously the coefficients of $(J_1+2(\nu -1)J_3)$ and $J_5$ in (\ref{Jlambda}) and (\ref{tildeJlambda}) when $t$ lies in the interval $[t_0+2N\varepsilon,t_f-2N\varepsilon ]$. Hence, the mapping $\lambda \mapsto \tilde{\mathcal{P}}_\nu (\varepsilon ,\lambda )$ tends to $\lambda \mapsto \mathcal{P}_\nu (\lambda )$ uniformly on any compact subset of the preceding product, as $\varepsilon $ tends to 0.\\ 
Let us deal with $\tilde{\mathcal{Q}}_\nu$. We compute first
\begin{eqnarray}
\tilde{\mathcal{P}}_\nu(\varepsilon,\lambda)-\mathcal{P}_\nu(\lambda) = (J_1+2(\nu-1)J_3)(\lambda^2+e^{-\lambda t }\Box_{-\varepsilon}\Box_\varepsilon e^{\lambda t})+\nonumber\\
J_5(-2\lambda+e^{-\lambda t}\Box_{\varepsilon}e^{\lambda t}-e^{-\lambda t}\Box_{-\varepsilon}e^{\lambda t}),
\label{diffdet}
\end{eqnarray}
for all $\varepsilon\neq 0$, $t\in[t_0,t_f]$ and $\lambda \in\mathbb{C}$. The l.h.s. of (\ref{diffdet}) is independent of $t$ and the r.h.s. is  constant w.r.t. $t$ inside $[t_0+2N\varepsilon,t_f-2N\varepsilon]$, as we see from (\ref{conv1}) and (\ref{conv2}). Expanding in Taylor series the exponentials $e^{k\lambda\varepsilon}$ w.r.t. $\varepsilon$, we find a matrix-valued convergent Taylor series w.r.t. $\varepsilon$ for $\tilde{\mathcal{P}}_\nu(\varepsilon,\lambda)$. The coefficient of $\varepsilon^m$ in $\tilde{\mathcal{P}}_\nu(\varepsilon,\lambda)$ is a polynomial matrix w.r.t. $\lambda$, independent of $t$ inside $[t_0+2N\varepsilon,t_f-2N\varepsilon]$. Now, the determinant of such a convergent Taylor series is itself a convergent Taylor series. 

At this point we have established that $\zeta^{2Nd}\det(\tilde{\mathcal{P}}_\nu(\varepsilon,\lambda))$ is a polynomial of degree $4Nd$ w.r.t. $\zeta=e^{\lambda\varepsilon}$ and admits a Taylor expansion w.r.t. $\varepsilon$ starting at $\zeta^{2Nd}\det(\mathcal{P}_\nu(\lambda))$.\\
We choose $\varepsilon_0$ so that $\mathcal{Q}_\nu\subset\dot{K}\subset K\subset\overline{\mathcal{B}}(0,1/\varepsilon_0)$ and small enough to separate the elements of $\tilde{\mathcal{Q}}_\nu$. Let $\delta$ as in Theorem \ref{thmCC}. We choose next $\epsilon$ so that, if $1\leq m\leq 4Nd$, the  coefficient of $\zeta^m$ in $\det(\tilde{\mathcal{P}}_\nu)-\det(\mathcal{P}_\nu)$ is less than $\delta$. Now, we may formulate the conclusion of Theorem \ref{thmCC} as the following inclusion
\begin{center}
$\displaystyle \tilde{\mathcal{Q}}_\nu =(\det (\tilde{\mathcal{P}}_\nu(\varepsilon,.)))^{-1}\{0\}\subset\left(\mathbb{C}\backslash\mathcal{B}(0,1/\varepsilon_0)\right)\cup\left(\bigcup_{\lambda\in\mathcal{Q}_\nu}\mathcal{B}(\lambda,\varepsilon_0)\right)$.
\end{center}
As a consequence, intersecting both sides with $K$ we get $d_H(\tilde{\mathcal{Q}}_\nu\cap K,\mathcal{Q}_\nu)<\varepsilon_0$ for all $\varepsilon$ small enough. This ends the proof.
\end{proof} 

\begin{remark}\rm 
The convergence of $\tilde{\mathbf{x}}(t)$ to $\mathbf{x}(t)$ as $\varepsilon$ tends to 0 implies more complicated issues. Indeed, not only the phases $\tilde{\mathcal{Q}}_n$ and $\tilde{\mathcal{Q}}_0$ have to tend to $\mathcal{Q}_n$ and $\mathcal{Q}_0$ respectively but the amplitudes $\tilde{\mathbf{x}}_{s,\alpha}$, $\tilde{\mathbf{x}}_{j,\alpha}$, and $\tilde{\mathbf{x}}'_{j,\beta}$, where $\alpha\in\tilde{\mathcal{Q}}_n$ and $\beta\in\tilde{\mathcal{Q}}_0$, have to tend also to the respective amplitudes ${\mathbf{x}}_{s,\alpha}$, ${\mathbf{x}}_{j,\alpha}$, and ${\mathbf{x}}'_{j,\beta}$, where $\alpha\in{Q}_n$ and $\beta\in{Q}_0$. We refer to \cite{RS2} for an examination of the difficulties in the case $n=1$. 
\label{remark3.4}
\end{remark}

%%%%%%%%%%%%%%%%%%%%%%%%%%%%%%%%%%%%%%%%
%%%%%%%%%%%%%%%%%%%%%%%%%%%%%%%%%%%%%%%%
% SECTION 5
\section{Periodicity and choreographies}

We focus in this section on periodic and choreographic solutions. Let us define a choreography of $n$ particles $(\mathbf{x}_1,\ldots,\mathbf{x}_n)$ in $\mathbb{R}^d$ as a $T$-periodic solution to the equations of motion in which the trajectories differ one to the other by some delay of the shape $\displaystyle \frac{kT}{n}$, $k\in\{1,\ldots,n\}$. In other words, a choreographic solution is a $\mathcal{C}^2$ mapping $\mathbf{u}:\mathbb{R}/T\mathbb{Z}\rightarrow \mathbb{R}^d$ such that $\mathbf{u}(t+T)=\mathbf{u}(t)$ and such that the family $\{\mathbf{x}_j(t)\}_j$, defined by $\mathbf{x}_j(t)=\mathbf{u}(t+jT/n)$, satisfies for all $j$
\begin{center}
$\begin{array}{rcl}\ddot{\mathbf{u}}(t+\frac{jT}{n}) & = & \mathbf{F}_j(\mathbf{u}(t+\frac{T}{n}),\ldots,\mathbf{u}(t+T)),\\
-\Box_{-\varepsilon}\Box_{\varepsilon}\mathbf{u}(t+\frac{jT}{n}) & = & \tilde{\mathbf{F}}_j(\mathbf{u}(t+\frac{T}{n}),\ldots,\mathbf{u}(t+T)),
\end{array}$
\end{center}
i.e. the respective equations of motion C.E.L. and D.E.L. presented in (\ref{systdyn}).

\begin{theorem}
\begin{enumerate}
\item Under the assumptions of Proposition \ref{prop3.1}, all the solutions to C.E.L. are periodic if and only if  
\begin{eqnarray}
\mathcal{Q}_n\cup \mathcal{Q}_0\subset i\mathbb{R},~\forall \lambda',\lambda''\in \mathcal{Q}_n\cup \mathcal{Q}_0,~\lambda'/\lambda''\in\mathbb{Q},\label{commensurableCEL}
\end{eqnarray}
\item Under the assumptions of Proposition \ref{prop3.2}, all the pseudo-periodic solutions to D.E.L. are periodic if and only if  
\begin{eqnarray}
\tilde{\mathcal{Q}}_n\cup \tilde{\mathcal{Q}}_0\subset i\mathbb{R},~\forall \lambda',\lambda''\in \tilde{\mathcal{Q}}_n\cup \tilde{\mathcal{Q}}_0,~\lambda'/\lambda''\in\mathbb{Q}.\label{commensurableDEL}
\end{eqnarray}
\item If $\det(\mathcal{P}_n(0))\neq 0$, $\det(\tilde{\mathcal{P}}_n(\varepsilon,0))\neq 0$ and
\begin{eqnarray}
|\mathcal{Q}_0|=2d,~\mathcal{Q}_0\subset i\mathbb{R}^\star,~\forall \lambda',\lambda''\in \mathcal{Q}_0,~\lambda'/\lambda''\in\mathbb{Q},\label{commensurableCELchoreo}\\
|\tilde{\mathcal{Q}}_0|=4Nd,~\tilde{\mathcal{Q}}_0\subset i\mathbb{R}^\star,~
\forall \lambda',\lambda''\in \tilde{\mathcal{Q}}_0,~\lambda'/\lambda''\in\mathbb{Q},\label{commensurableDELchoreo}
\end{eqnarray}
then there exists choreographic solutions $\mathbf{x}_j(t)$ and $\tilde{\mathbf{x}}_j(t)$ to C.E.L. and D.E.L.. 
\end{enumerate}
\end{theorem}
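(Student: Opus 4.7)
The plan is to split the statement into two distinct tasks. Parts~(1) and~(2) are periodicity criteria, while part~(3) is a constructive existence statement.

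For parts~(1) and~(2), by Propositions~\ref{prop3.1} and~\ref{prop3.2} the general solution has the shape (\ref{psdf}) with exponents ranging exactly over $\mathcal{Q}_n\cup\mathcal{Q}_0$ (resp.\ $\tilde{\mathcal{Q}}_n\cup\tilde{\mathcal{Q}}_0$) and amplitudes free in the various kernels. For necessity I would, for each $\lambda$ in the exponent set, choose amplitudes so that only the $\lambda$-mode is non-zero; $T$-periodicity of that solution forces $e^{\lambda T}=1$, hence $\lambda\in i\mathbb{R}$, and the existence of one single $T$ working for every $\lambda$ in the (finite) exponent set gives the commensurability of the ratios $\lambda/\lambda'$. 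For sufficiency, commensurability produces $T$ explicitly: writing $\lambda_k=i\omega p_k/q_k$ in lowest terms one takes $T=2\pi\,\mathrm{lcm}(q_k)/\omega$, and then $e^{\lambda_k T}=1$ for every $k$, so every solution is $T$-periodic. Part~(2) is verbatim the same argument with the tilded sets.

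For part~(3) I would construct one explicit choreography. In the C.E.L.\ case, pick $T>0$ with $e^{\beta T}=1$ for every $\beta\in\mathcal{Q}_0$ (possible by (\ref{commensurableCELchoreo})), set $\mathbf{x}_{s,0}:=-n(J_2+2(n-1)J_4)^{-1}J_7$, choose $\mathbf{u}_\beta\in\ker\mathcal{P}_0(\beta)$ arbitrary but with $\mathbf{u}_\beta=0$ whenever $e^{\beta T/n}=1$, and put
\[
\mathbf{u}(t)=\frac{1}{n}\mathbf{x}_{s,0}+\sum_{\beta\in\mathcal{Q}_0}e^{\beta t}\mathbf{u}_\beta,\qquad \mathbf{x}_j(t)=\mathbf{u}\!\left(t+\frac{jT}{n}\right).
\]
The key identity is the character-sum cancellation $\sum_{j=1}^n e^{\beta jT/n}=0$ whenever $e^{\beta T/n}\neq 1$; it yields $\mathbf{x}_s(t)\equiv \mathbf{x}_{s,0}$, so (\ref{CELchoreoCTsumsimple}) is satisfied by definition of $\mathbf{x}_{s,0}$, and plugging the ansatz into (\ref{CELchoreoCT}) reduces it to $\mathcal{P}_0(\beta)\mathbf{u}_\beta=0$ on every $\beta$-mode plus the algebraic relation $-(J_2-2J_4)\mathbf{x}_{s,0}/n=2J_4\mathbf{x}_{s,0}+J_7$, which is equivalent to the defining equation of $\mathbf{x}_{s,0}$ itself. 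The D.E.L.\ case is handled by the same recipe: choose $T$ from (\ref{commensurableDELchoreo}), replace $(\mathcal{P}_0,\mathcal{Q}_0,\mathbf{x}_{s,0})$ by $(\tilde{\mathcal{P}}_0,\tilde{\mathcal{Q}}_0,\tilde{\mathbf{x}}_{s,0})$ using the discrete constant $\tilde{\mathbf{x}}_{s,0}=n\tilde{\mathcal{P}}_n(\varepsilon,0)^{-1}(\Box_{-\varepsilon}1\,J_6+J_7)$ produced in the proof of Proposition~\ref{prop3.2}, and verify by the same character-sum and kernel identities that the corresponding $\tilde{\mathbf{u}}$ yields a choreography on the grid $\mathcal{G}_{t_j,\varepsilon}$; the pseudo-periodic extension to $[t_0,t_f]$ is immediate from the autonomy argument closing the proof of Proposition~\ref{prop3.2}.

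The main obstacle is bookkeeping rather than analysis: one must check carefully that the character-sum cancellation is what forces $\mathbf{x}_s$ to be constant, so the exceptional modes $\beta$ with $e^{\beta T/n}=1$ (those for which $\mathbf{u}$ is already $T/n$-periodic in the $\beta$-component, corresponding to a degenerate ``all particles coincide'' choreography on that mode) must be excluded from the sum defining $\mathbf{u}$. A secondary subtlety in the D.E.L.\ case is that the ansatz is initially only grid-valued, but the translation-invariance of $\Box_{\pm\varepsilon}$ on the relevant subinterval makes the extension to a pseudo-periodic solution on $[t_0,t_f]$ automatic, exactly as in the last paragraph of the proof of Proposition~\ref{prop3.2}.
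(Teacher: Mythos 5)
Your proposal is correct and takes essentially the same route as the paper: parts (1)--(2) via linear independence of the exponential modes (with the common period obtained by applying periodicity to a solution exciting all modes simultaneously, which is how your ``single $T$'' step should be read, and is exactly what the paper does by letting the amplitudes range freely over the kernels), and part (3) via the character-sum cancellation forcing $\mathbf{x}_s$ to be constant together with the kernel conditions $\mathbf{u}_\beta\in\ker(\mathcal{P}_0(\beta))$, resp. $\ker(\tilde{\mathcal{P}}_0(\varepsilon,\beta))$, and the constant term $\mathcal{P}_n(0)^{-1}J_7=\frac{1}{n}\mathbf{x}_{s,0}$. You are in fact slightly more careful than the paper in explicitly excluding the modes with $e^{\beta T/n}=1$, a case the paper's geometric-sum argument tacitly ignores.
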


\begin{proof}
\begin{enumerate}
\item We first notice that if $\{\mathbf{u}_{\ell}\}_{\ell=1,\ldots,K}$, is a family of nonzero vectors in $\mathbb{C}^d$, then the various functions $t\mapsto e^{\lambda_\ell t}\mathbf{u}_{\ell}$ are linearly independent iff the $\lambda_\ell$ are pairwise distinct. It relies on the nonsingularity of the Vandermonde matrix $\mathcal{V}(\lambda_1,\ldots,\lambda_K)$.
As a consequence, the function $\mathbf{u}(t)=\sum_{\ell=1}^Ke^{\lambda_\ell t}\mathbf{u}_{\ell}$ is periodic iff for some $T>0$ we have $\forall \ell$, $\lambda_\ell T\in 2i\pi\mathbb{Z}$ and this is equivalent to the requirement $\forall\ell$, $i\lambda_\ell\in\mathbb{R}^\star$ and $\forall j,k$, $\lambda_j/\lambda_k\in\mathbb{Q}^\star$. Therefore, the period $T$ of $\mathbf{u}(t)$ is $\inf\{T>0,\frac{T\lambda_\ell}{2i\pi}\in\mathbb{Z},\forall\ell\}$. Taking in account that the vectors $\mathbf{x}_{s,\alpha}$ and $\mathbf{x}'_{j,\beta}$, occuring in the proof of Proposition \ref{prop3.1}, may be chosen arbitrarily in the respective appropriate null spaces $\ker(\mathcal{P}_n(\alpha))$ and $\ker(\mathcal{P}_0(\beta))$, the previous properties of periodicity apply to the set of solutions to C.E.L. and give formula (\ref{commensurableCEL}). 
\item Pseudo-periodic solutions to D.E.L. are of the shape (\ref{psdf}) by using Proposition \ref{prop3.2} and the previous arguments apply.
\item We first observe that for each choreographic solution of the shape (\ref{psdf}), $\mathbf{x}_s(t)$ is necessarily constant. Indeed, 
\begin{equation}
\mathbf{x}_s(t)=\sum_{j=1}^n\mathbf{u}\left(t+\frac{jT}{n}\right)=n\mathbf{u}_0+\sum_{\ell=1}^K\mathbf{u}_{\ell}e^{\lambda_\ell t}\sum_{j=1}^n\left(e^{\frac{\lambda_\ell T}{n}}\right)^j.
\label{choreosol}
\end{equation}
By periodicity, we have $\lambda_\ell T\in 2i\pi\mathbb{Z}$ for all $\ell$ so that $\sum_{j=1}^n\left(e^{\frac{\lambda_\ell T}{n}}\right)^j=0$. Having this fact in mind, we may solve (\ref{CELchoreoCT}). Our assumptions imply that the solution $\mathbf{x}_j(t)$ to (\ref{CELchoreoCT}) may be written as (\ref{psdf}) with $K=2d$, since the underlying Quadratic Eigenvalue Problem satisfies $|\mathcal{Q}_0|=2d$ and $\det(\mathcal{P}_n(0))\neq 0$ (see Proposition \ref{prop3.1}). Plugging $\mathbf{x}_j(t)=\mathbf{u}(t+\frac{jT}{n})$ into (\ref{CELchoreoCT}) and using the linear independence of the summands (\ref{psdf}), we see that (\ref{CELchoreoCT}) is satisfied if and only if $\mathbf{u}_0=\mathcal{P}_n(0)^{-1}J_7$ and for all $\lambda\in \mathcal{Q}_0$, $\mathbf{u}_\lambda\in\ker(\mathcal{P}_0(\lambda))$. If we choose the vectors $\mathbf{x}_j(t_0)$ and $\dot{\mathbf{x}}_j(t_0)$ or $\mathbf{x}_j(t_0)$ and $\mathbf{x}_j(t_f)$ for all $j$ according to the preceding explicit form for $\mathbf{x}_j(t)$, we have justified the existence of choreographic solutions to C.E.L. with $\mathbf{x}_s=cst$. 

Let us deal now with D.E.L.. As seen in the proof of Proposition \ref{prop3.2}, each solution $\tilde{\mathbf{x}}_j(t)$ to (\ref{DELchoreoCT}) has the shape (\ref{psdf}) with $K=4Nd$ due to our assumptions on $\tilde{\mathcal{Q}}_0$ and $\tilde{\mathcal{P}}_n(\varepsilon,0)$. The remainder of the proof is entirely similar to C.E.L.. First, (\ref{DELchoreoCT}) is satisfied if and only if $\mathbf{u}_0=\tilde{\mathcal{P}}_n(\varepsilon,0)^{-1}(J_7-\Box_{-\varepsilon}J_6)$ and for all $\lambda\in\tilde{\mathcal{Q}}_0$, $\mathbf{u}_\lambda\in\ker(\tilde{\mathcal{P}}_0(\varepsilon,\lambda))$. Second, convenient choice of initial or boundary conditions guarantee the existence of choreographic solutions to D.E.L. with $\mathbf{x}_s=cst$.
\end{enumerate}
\end{proof}

\begin{remark}\rm
We may convert the existence of choreographic solutions into a linear algebra problem. Indeed, we add to the systems described at the end of the Sections 3.1 and 3.2 the following equations 
\begin{center}
$\mathbf{x}_{s,\alpha}=0$,  
$\mathbf{x}_{j,\alpha}=e^{\alpha(j-1)\frac{T}{n}}\mathbf{x}_{1,\alpha}$ and $\mathbf{x}'_{j,\beta}=e^{\beta(j-1)\frac{T}{n}}\mathbf{x}_{1,\beta}$,
\end{center}
for all $\alpha\in \mathcal{Q}_n, \beta\in \mathcal{Q}_0$ and $j=1,\ldots,n$. Due to (\ref{solgenxsCEL}) and (\ref{solgenxlCEL}) we see that, provided Dirichlet problem is well-posed, we find a choreographic solution.
\end{remark}

\begin{remark}\rm
In the litterature (see for instance \cite[Section 3.10]{TM}), the problem of the existence of choreographic solutions arises when one studies gyroscopic systems. The algebraic conditions P7 and P8 in \cite[Table 1.1]{TM} amount to require that $J_1=\tra{J}_1>0$, $J_2=\tra{J}_2>0$, $J_3$ and $J_4$ symmetric and small enough compared to $J_1$ and $J_2$ respectively. 
\end{remark}

%%%%%%%%%%%%%%%%%%%%%%%%%%%%%%%%%%%%%%%%
%%%%%%%%%%%%%%%%%%%%%%%%%%%%%%%%%%%%%%%%
% SECTION 6
 
\section{Numerical experiments on choreographies}

Experimental and working algorithms performed in this last section are implemented in Maple and Matlab. We deal with real symmetric matrices $J_1,\ldots,J_4$, zero vectors $J_6,J_7$, small dimension systems ($d=2,3$) since it displays already the main features, and arbitrary number of particles. Furthermore, existence of periodic or choreographic solutions requires that $J_5=0$. Let us give some details on the choice of the matrices $J_i$. Given $J_1,J_2,J_3$, we set, if $d=2$, $J_4=\frac{1}{2}J_2+\frac{1}{2}(J_1-2J_3)\begin{pmatrix}j_1 & j_2\\j_3 & j_4\end{pmatrix}$. Identifying the coefficients of the polynomial $\det(\mathcal{P}_0(\lambda))$ with those of $(\lambda^2+\beta_1^2)(\lambda^2+\beta_2^2)$ and requiring that $J_4=\tra{J}_4$, we get three equations on $j_1,j_2,j_3$, the coefficient $j_{4}$ standing free. Thus, we may choose $J_1+2(n-1)J_3$ and $J_1-2J_3$ definite positive, $J_2-2J_4$ and $J_2+2(n-1)J_4$ definite negative.

We present in Figures \ref{figure1} and \ref{figure2} the graphs of two solutions to gyroscopic C.E.L., sharing the same matrices $J_1,J_2,J_3$. 
On the left, a typical periodic curve obtained by considering $\beta_1=4i$ and $\beta_2=10i$ and on the right, a non-periodic curve. Incommensurability between $4i$ and $7i\sqrt{2}$ explains the non-choreographic behaviour of the curve, as mentioned in property (\ref{commensurableCEL}). 

\begin{figure}[!ht]
\begin{minipage}[c]{.46\linewidth}
\includegraphics[width=4cm,angle=270]{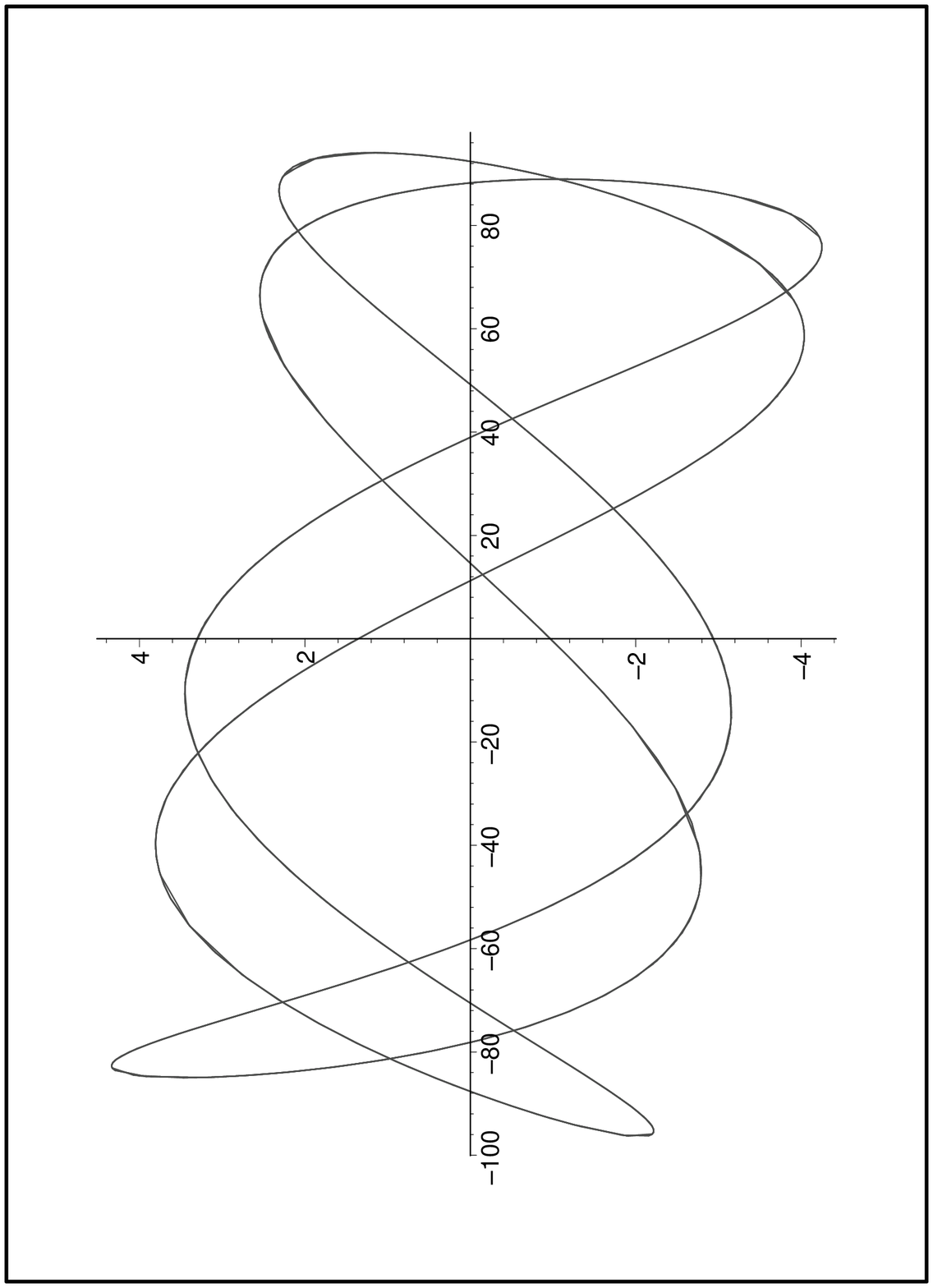}
\caption{C.E.L., $\beta_1=4i$ and $\beta_2=10i$}
\label{figure1}
\end{minipage} \hfill
\begin{minipage}[c]{.46\linewidth}
\includegraphics[width=4cm,angle=270]{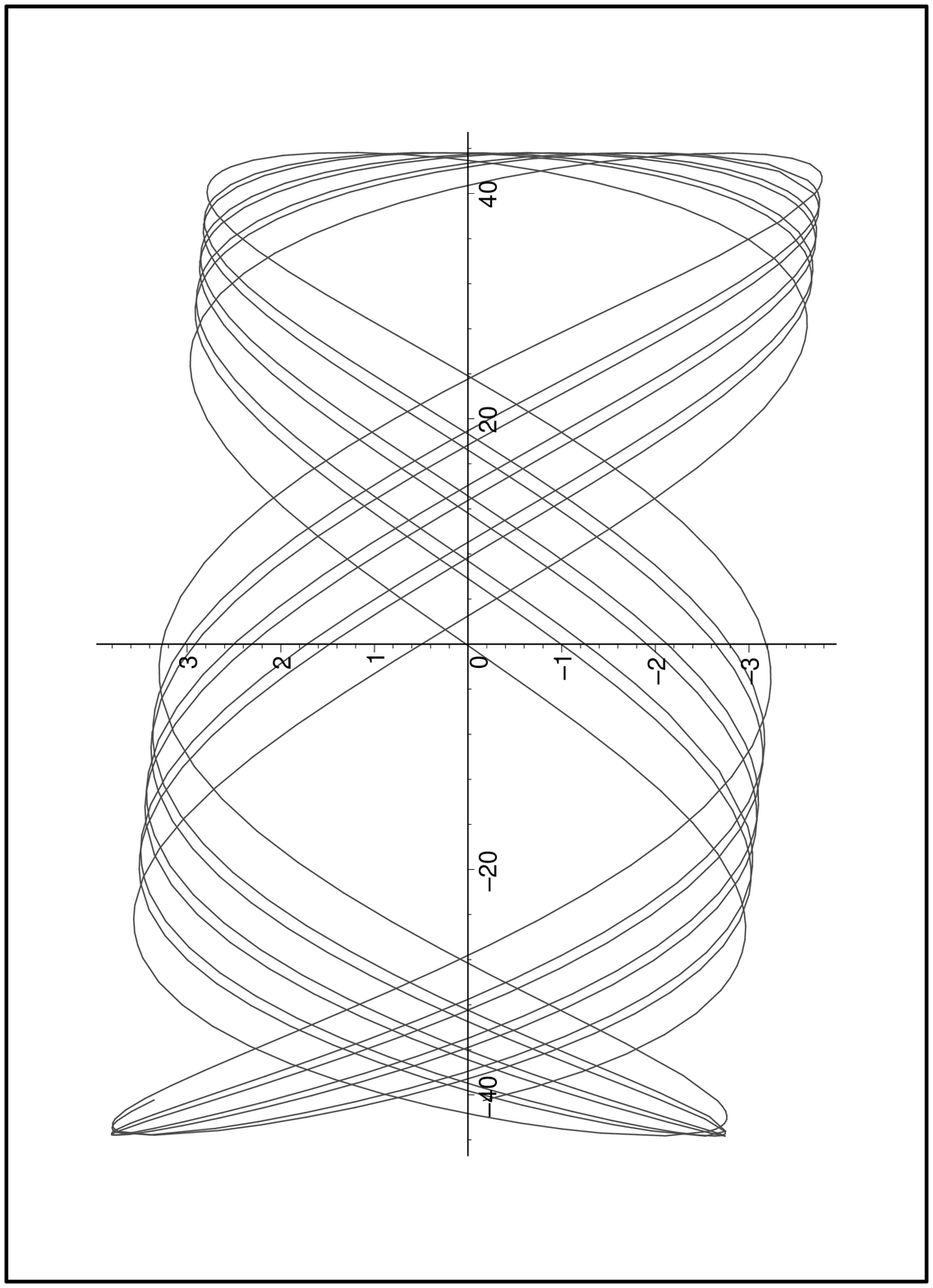}
\caption{C.E.L., $\beta_1=4i$ and $\beta_2=7i\sqrt{2}$}
\label{figure2}
\end{minipage}
\end{figure}

Let us deal now with D.E.L.. For sake of clarity, we shall denote by $\mathbf{x}_j(t)$ and $\mathbf{y}_{j,M}(t)$, $\forall j\in\{1,\ldots,n\}$, the unique solution to C.E.L (\ref{CELchoreoCT}) and the unique pseudo-periodic extension to $[t_0,t_f]$ of the unique solution to D.E.L. (\ref{DELchoreoCTxlexplicit}) on the grid $\mathcal{G}_{t_0,\varepsilon}$ with $\varepsilon=\frac{t_f-t_0}{M}$ respectively.\\
First, we give some hints to solve (\ref{DELchoreoCTxlexplicit}). When $t\in[t_0+2N\varepsilon,t_f-2N\varepsilon]$, we may compute $\mathbf{y}_j(t_0+2N\varepsilon)$ as a function of $\mathbf{y}_j(t_0+k\varepsilon)$ with $k$ varying from $-2N$ to $2N-1$. If $t\notin[t_0+2N\varepsilon,t_f-2N\varepsilon]$, some of the characteristic functions occuring in (\ref{DELchoreoCTxlexplicit}) vanish and solving (\ref{DELchoreoCTxlexplicit}) must be slightly modified, see more details in \cite{RS2}. We consider the matrices $J_1=\begin{pmatrix}7 & 2\\2 & 7\end{pmatrix}$, $J_2=\begin{pmatrix}5 & -1\\-1 & 5\end{pmatrix}$, $J_3=\begin{pmatrix}8 & 1\\1 & 8\end{pmatrix}$ and $(\beta_1,\beta_2)=(2i,5i)$. In a first experiment, we use an operator $\Box_{\varepsilon,k}$ such that $N=1$ and $(\gamma_{-1},\gamma_0,\gamma_1)=(-\frac{1}{2}+ik,-2ik,\frac{1}{2}+ik)$ where $k\in\mathbb{R}$. Because $\mathbf{x}_{j+1}(t)=\mathbf{x}_{j}(t+\frac{1}{n}T)$ and $\mathbf{y}_{j+1,M}(t)=\mathbf{y}_{j,M}(t+\frac{1}{n}\tilde{T})$, all the particles have the same trajectory, either in both cases C.E.L. and D.E.L.. Figure \ref{figure6} depicts the curves of $\mathbf{y}_{j,35}$, $\mathbf{y}_{j,42}$, $\mathbf{y}_{j,75}$ and $\mathbf{x}_{j}$. 
\begin{figure}[!ht]
\begin{minipage}[c]{.46\linewidth}
\includegraphics[width=5.7cm,height=3.7cm]{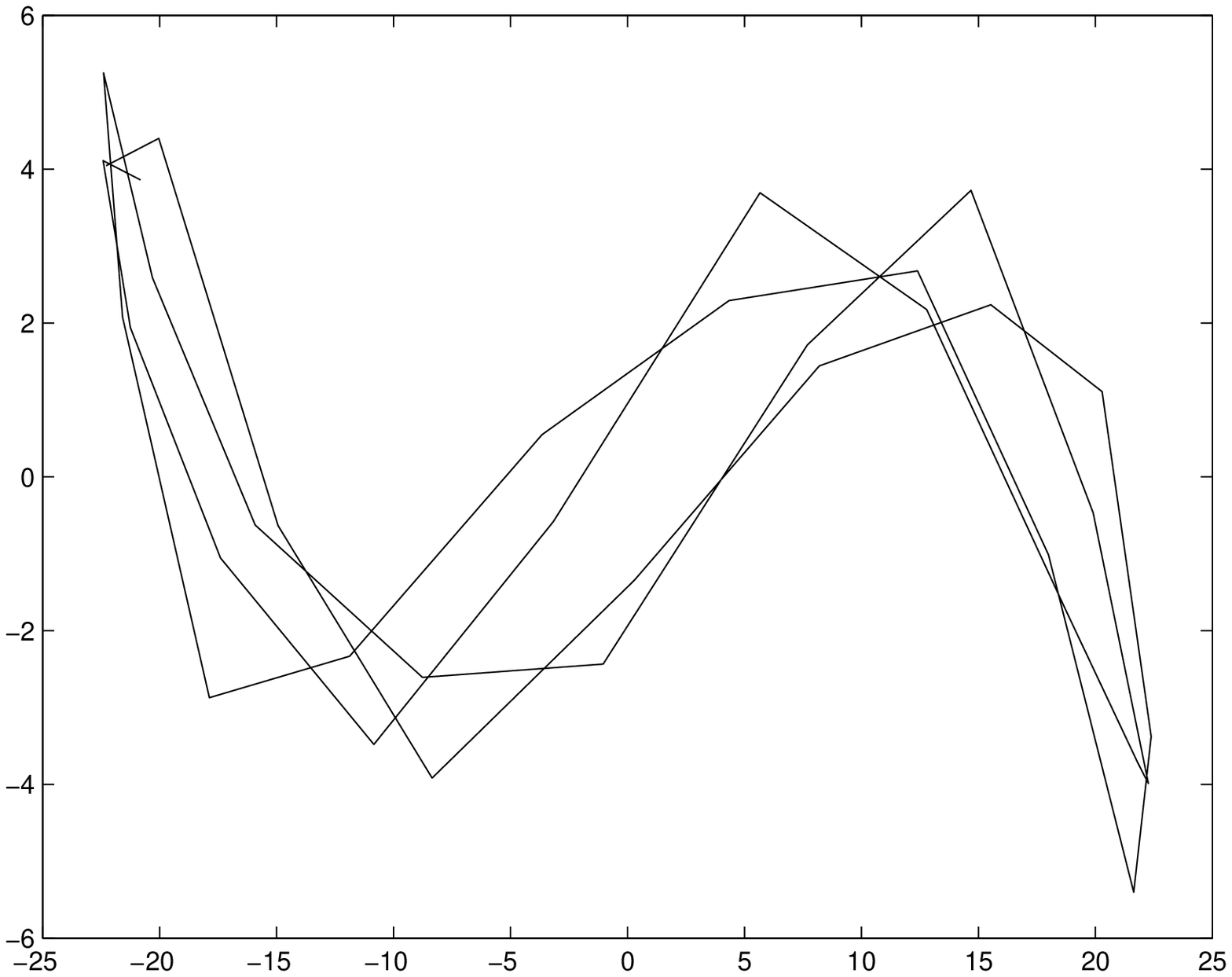}
\end{minipage} \hfill
\begin{minipage}[c]{.46\linewidth}
\includegraphics[width=5.7cm,height=3.7cm]{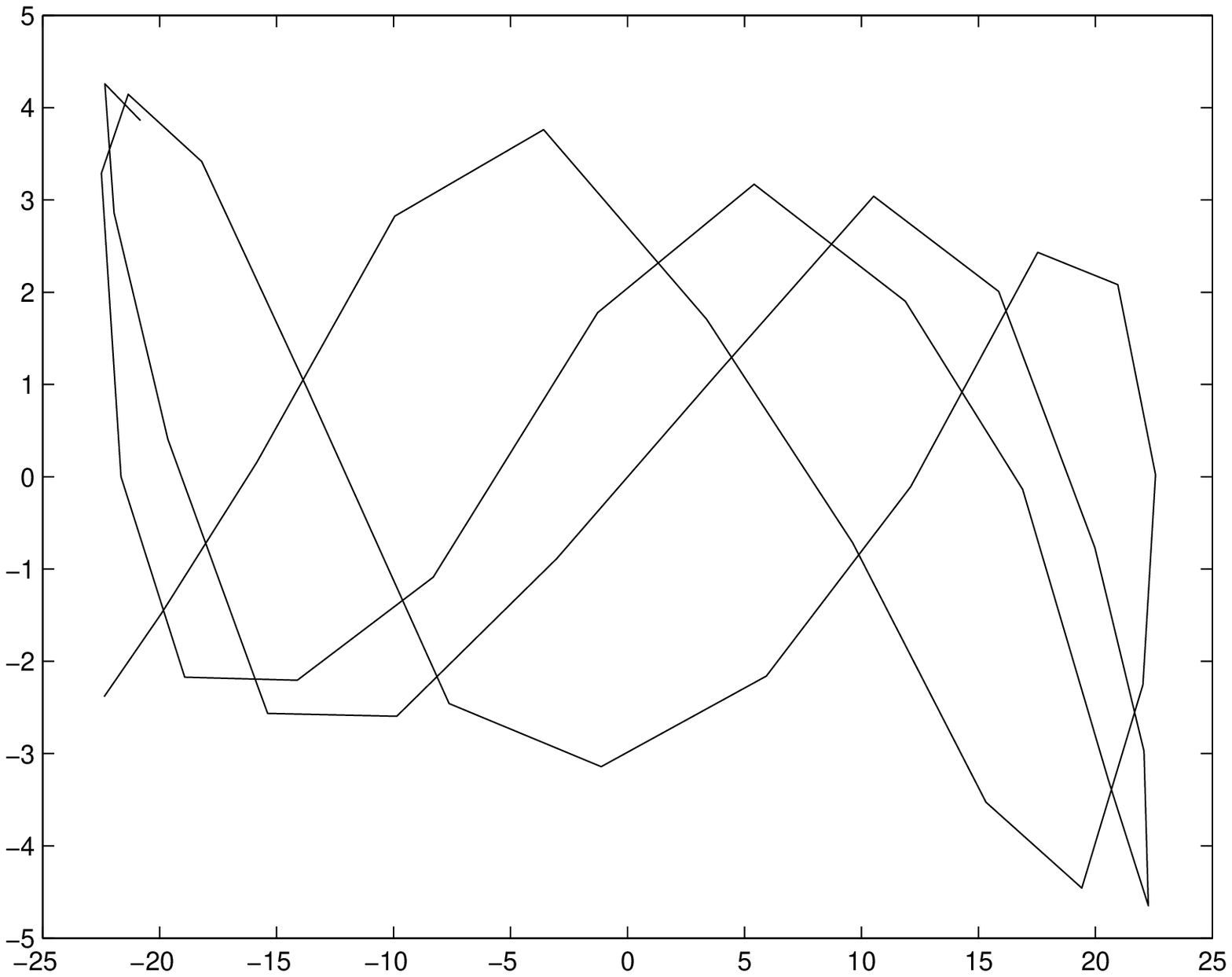}
\end{minipage}
\begin{minipage}[c]{.46\linewidth}
\includegraphics[width=5.7cm,height=3.7cm]{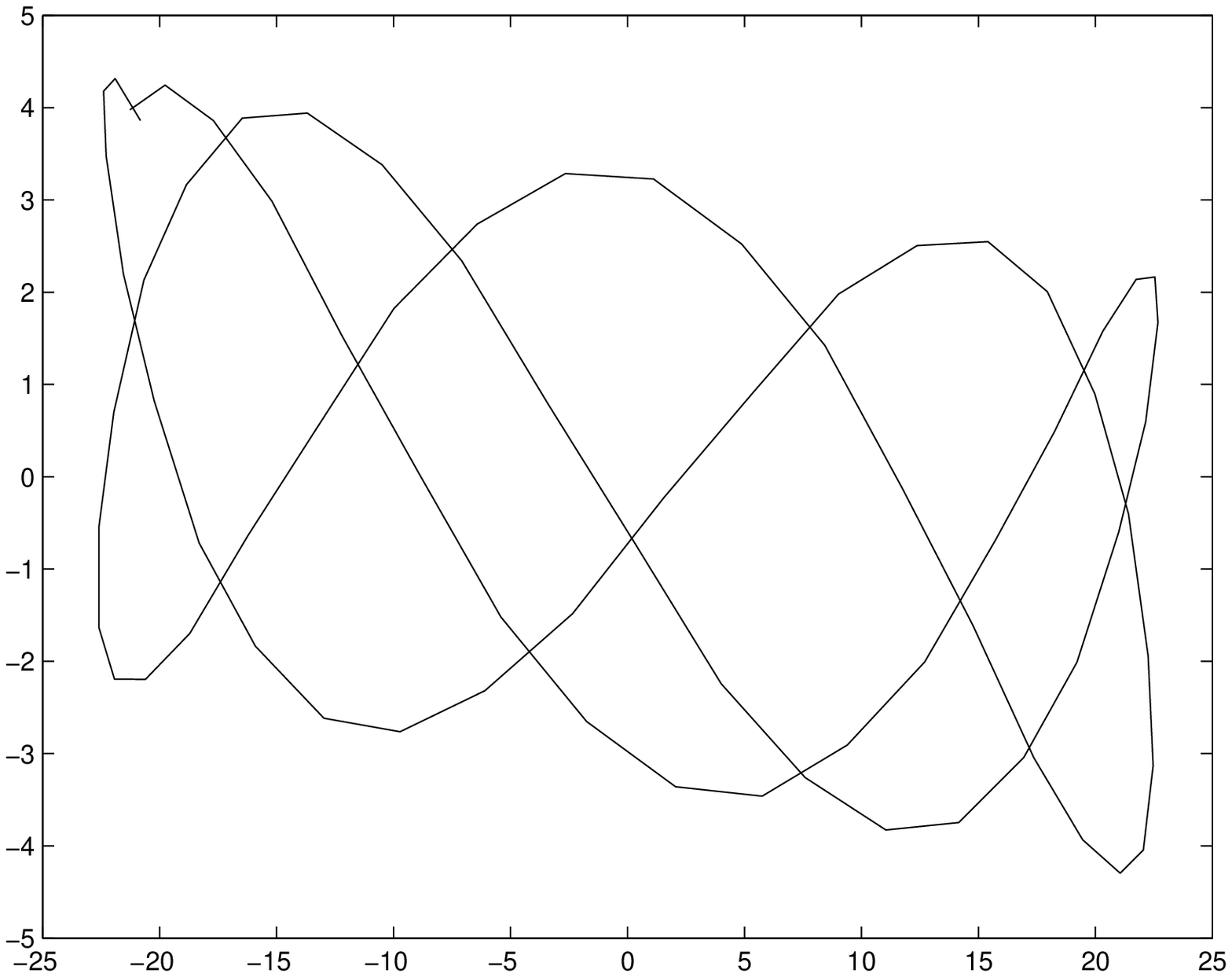}
\end{minipage}\hfill
\begin{minipage}[c]{.46\linewidth}
\includegraphics[width=5.7cm,height=3.7cm]{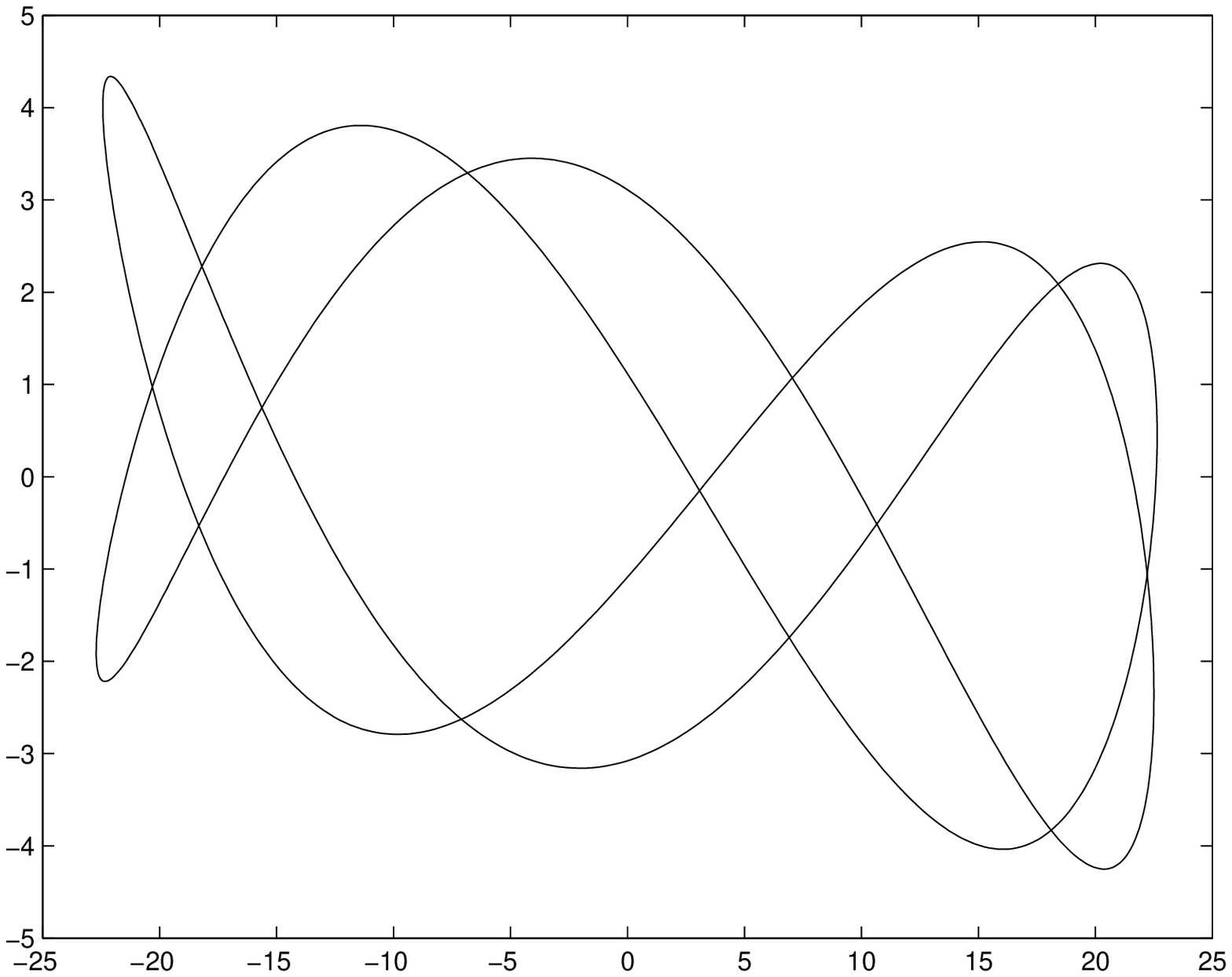}
\end{minipage}
\caption{Solution of D.E.L for $M=35,42,75$ and solution to C.E.L. in $\mathbb{R}^2$}
\label{figure6}
\end{figure}

Since our algorithms are suitable for each dimension, we provide also an example of quadratic choreography with $d=3$ in Figure \ref{figure10}. We choose $J_1,\ldots,J_4$ such that $\det(\tilde{\mathcal{P}}_0(\varepsilon,\lambda))=(\lambda^2+1)(\lambda^2+4)(\lambda^2+9)$ and the same operator $\Box_{\varepsilon,k}$ than previously.
\begin{figure}[!ht]
\begin{minipage}[c]{.46\linewidth}
{\includegraphics[width=5.7cm,height=3.7cm]{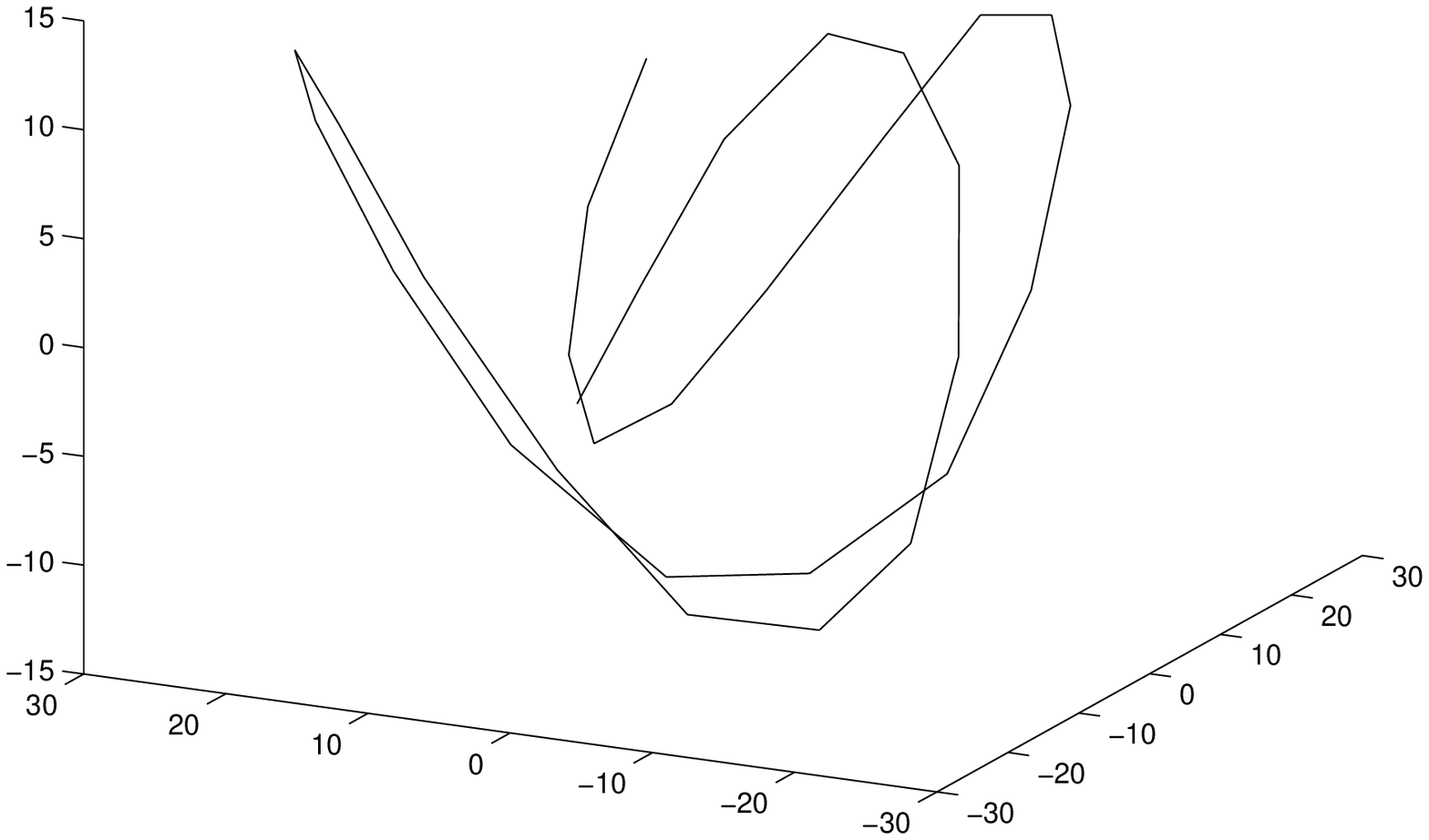}}
\end{minipage} \hfill
\begin{minipage}[c]{.46\linewidth}
\includegraphics[width=5.7cm,height=3.7cm]{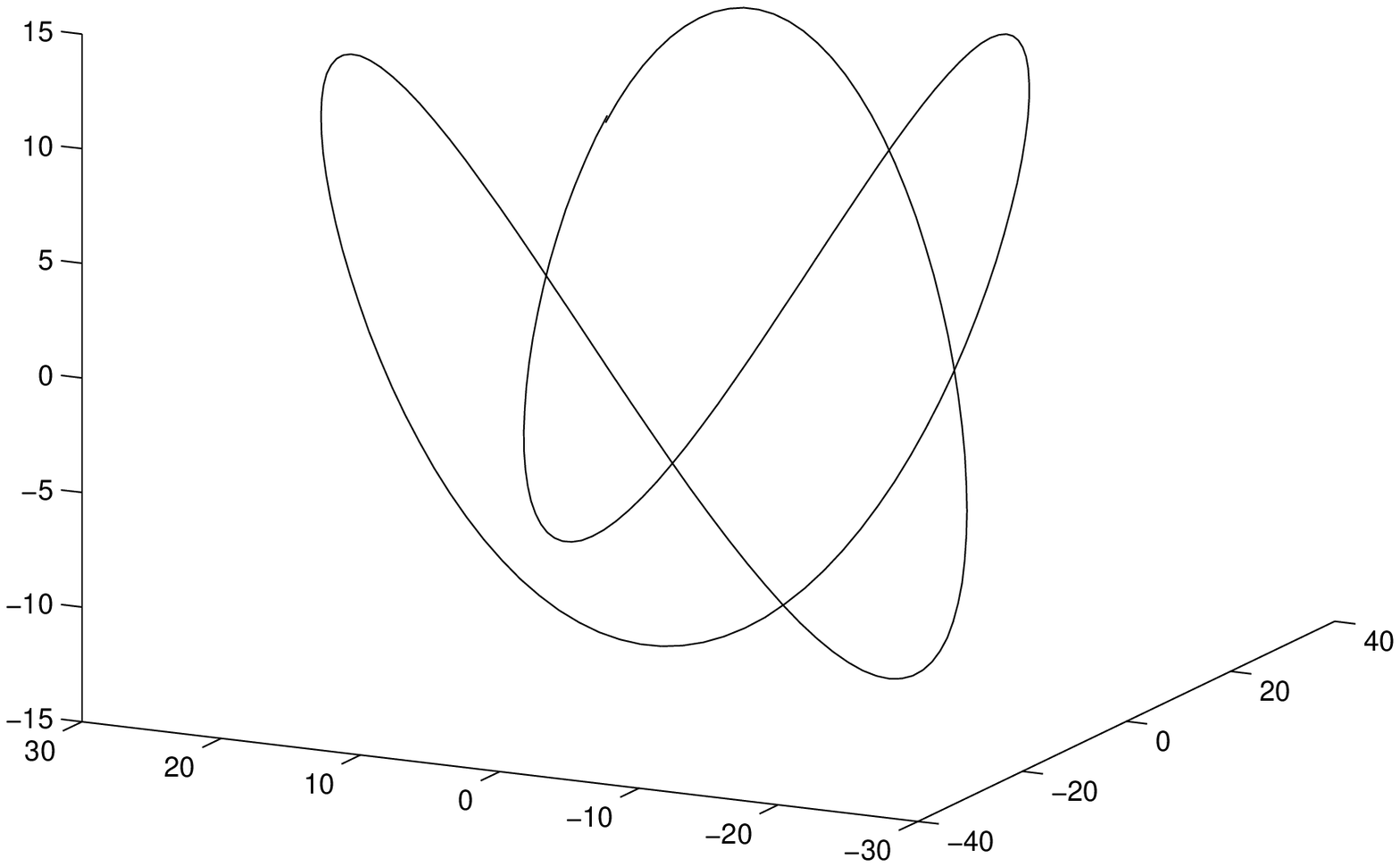}
\end{minipage}
\caption{Solution of D.E.L. for $M=30$ and solution to C.E.L. in $\mathbb{R}^3$}
\label{figure10}
\end{figure}

Figures \ref{figure6} and \ref{figure10} illustrate roughly the phenomenon of convergence, as $M$ increases, of the pseudo-periodic solution $\mathbf{y}_{j,M}(t)$ to D.E.L. to the solution $\mathbf{x}_j(t)$ to C.E.L., for all $j$, for all operator $\Box_{\varepsilon,k}$. In order that the approximation becomes meaningful, the number $M=\#(\mathcal{G}_{t_0,\varepsilon})-1$ must satisfy 
$$M^2\gg\frac{1}{|\gamma_{-1}\gamma_1|}(t_f-t_0)^2\rho((J_1-2J_3)^{-1}(J_2-2J_4))$$
as one sees from equation (\ref{DELchoreoCTxlexplicit}) ($\rho$ denotes here the spectral radius). Let us remark that this lower bound is independent on $d$ and still holds for general operators $\Box_\varepsilon$. 

In the next experiment, we still work with the previous lagrangian, with $N=1$ and with an operator $\Box_\varepsilon$ such that $\gamma_0=-(\gamma_{-1}+\gamma_1)$. We choose $M=100$ in order to avoid some erratic behaviour observed, for example, in the first plot ($M=35$) of Figure \ref{figure6}. We provide the plot of $-\log(\min(\|\mathbf{x}_j(t)-\mathbf{y}_j(t)\|_2,3M))$ as a function of $(\gamma_{-1},\gamma_1)\in\mathbb{R}^2$ in Figure \ref{figure7}. Two peaks occur at $(\gamma_{-1},\gamma_1)=\pm(1/2,1/2)$ and reveal a good approximation of $\mathbf{x}_j(t)$ by $\mathbf{y}_j(t)$. The two previous pairs are better understood if we have a look to the error $\mathbf{x}_j(t)-\mathbf{y}_j(t)$ with complex operators $\pm\Box_{\varepsilon,k}$. In Figure \ref{figure8}, we give the plot of the $2$-norm of the error with $(\gamma_{-1},\gamma_0,\gamma_{1})=(-\frac{1}{2}+ik,-2ik,\frac{1}{2}+ik)$ for several values of $M$. The operator $\Box_{\varepsilon,0}=0$ seems to be in any case the better choice.
\begin{figure}[!ht]
\begin{minipage}[c]{.46\linewidth}
\includegraphics[width=5.7cm]{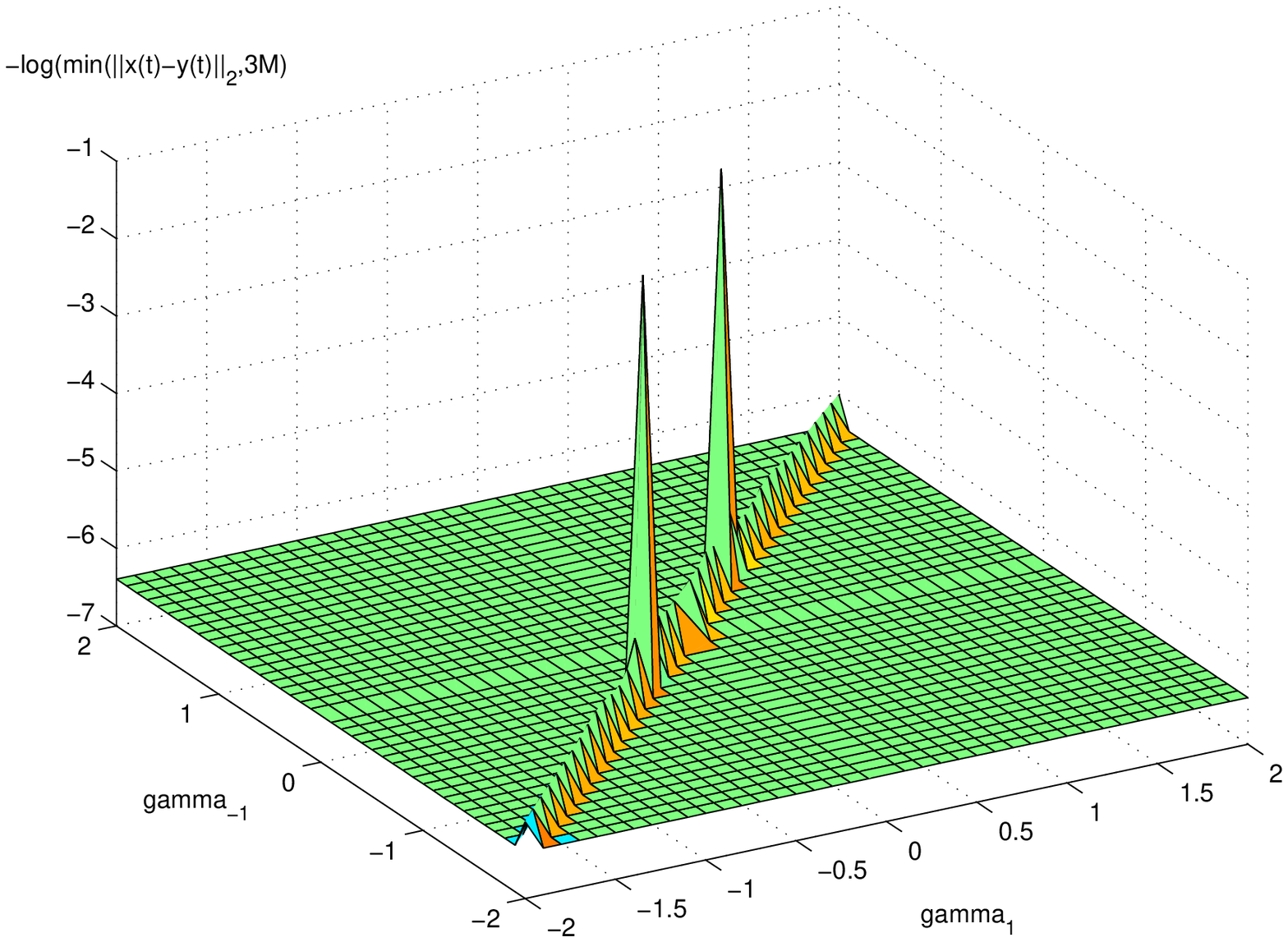}
\caption{$-\log(\min(\|\mathbf{x}_j(t)-\mathbf{y}_j(t)\|_2,3M))$ as function of $\gamma_{-1}$ and $\gamma_1$ ($M=100$)}
\label{figure7}
\end{minipage} \hfill
\begin{minipage}[c]{.46\linewidth}
\includegraphics[width=5.7cm]{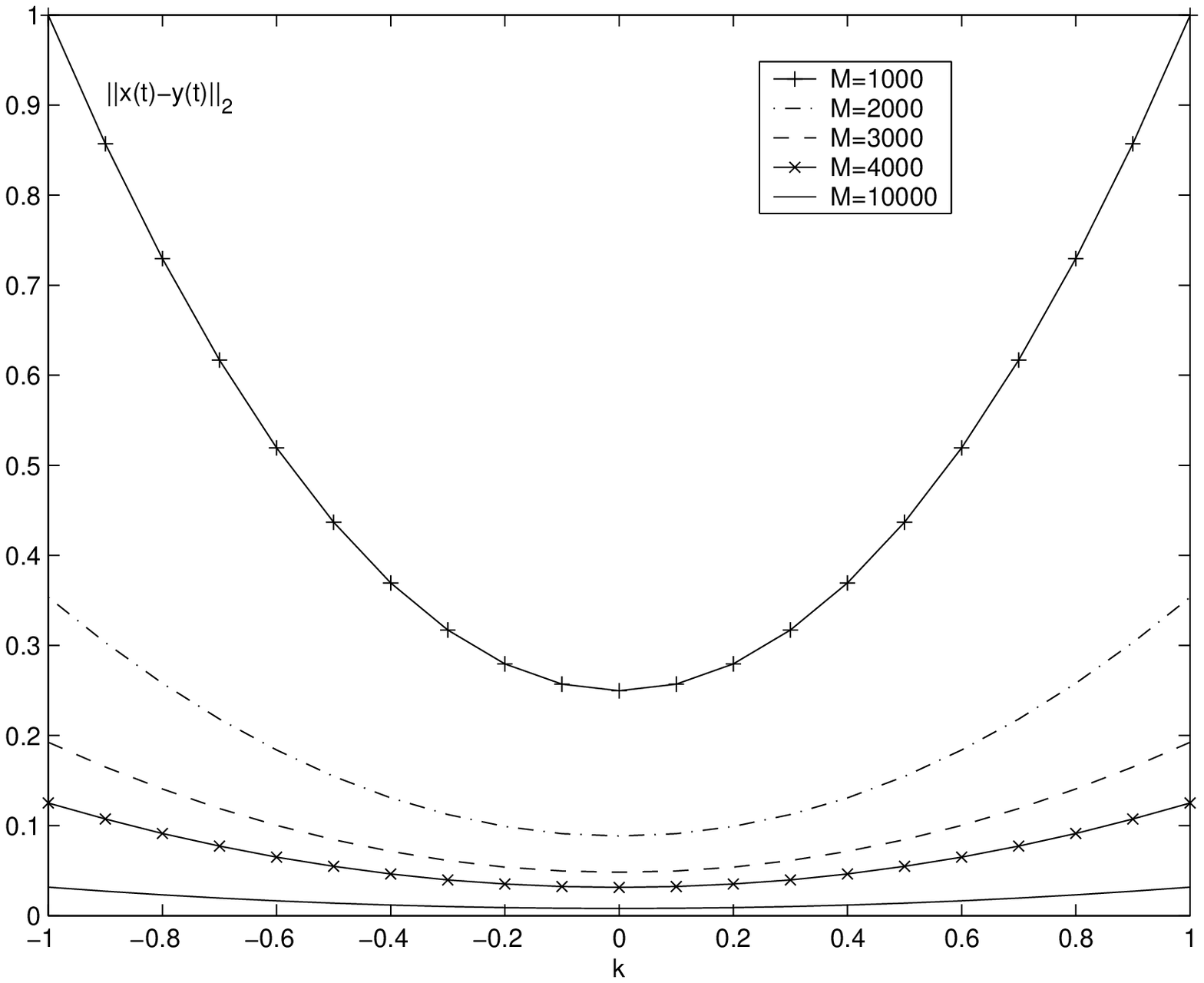}
\caption{$2$-norm of the error with $\Box_{\varepsilon,k}$ for several values of $M$}
\label{figure8}
\end{minipage}
\end{figure}

Let us conclude this paper with an additional remark on the convergence of solutions, completing Remark \ref{remark3.4}. Recall first that D.E.L. converges to C.E.L iff $\Box_\varepsilon$ if of the shape (\ref{ourbox}) and checks $\Box_\varepsilon 1=0$ and $\Box_\varepsilon t=1$, provided $t\in[t_0+2N\varepsilon,t_f-2N\varepsilon]$, see \cite[Definition 6.1. and Theorem 6.3]{RS1}. In that case, the condition $(\gamma_{-1},\gamma_0,\gamma_{1})=(-\frac{1}{2}+ik,-2ik,\frac{1}{2}+ik)$ is linked to the inclusion $\tilde{\mathcal{Q}}_0\subset i\mathbb{R}^\star$ as mentioned in \cite[Proposition 5.1]{RS2}) for the special case $d=1$. Based on the preceding experiments, we conjecture that under mild condition of non-resonance of the lagrangian, the solution to D.E.L. converges to the solution to C.E.L., as $\varepsilon$ tends to 0. 

%%%%%%%%%%%%%%%%%%%%%%%%%%%%%%%%%%%%%%%%
%%%%%%%%%%%%%%%%%%%%%%%%%%%%%%%%%%%%%%%%
% BIBLIOGRAPHIE


\begin{thebibliography}{9}

\bibitem{CC}
{\sc F. Cucker and A. G. Corbalan},
{\em An alternate proof of the continuity of the roots of a polynomial},
Amer. Math. Monthly, Vol.~96 (1989), pp.~342--345.

\bibitem{Cre}
{\sc J. Cresson},
{\em Non-differentiable variational principles},
J. Math. Anal. Appl., Vol.~307 (2005), No.~1, pp.~48--64.

\bibitem{CFT}
{\sc J. Cresson, G. F. F. Frederico and D. F. M. Torres},
{\em Constants of Motion for Non-Differentiable Quantum Variational Problems},
Topol. Methods Nonlinear Anal., Vol.~33 (2009), No.~2, pp.~217--232.

\bibitem{GLR}
{\sc I. Gohberg, P. Lancaster and L. Rodman}, 
{\em Matrix Polynomials}, Academic Press, New-York, 1982.

\bibitem{L1}
{\sc P. Lancaster}, 
{\em Lambda-Matrices and Vibrating Systems}, Pergamon Press, Oxford, UK, 1966.

\bibitem{L2}
{\sc P. Lancaster},
{\em Quadratic eigenvalue problems}, Linear Algebra Appl., 150 (1991), pp. 499-506.

\bibitem{RS1} {\sc P. Ryckelynck, L. Smoch},
{\em Discrete calculus of Variations for Oscillatory Quadratic Lagrangians},
submitted to J. Math. Anal. Appl (March 2011).

\bibitem{RS2} {\sc P. Ryckelynck, L. Smoch},
{\em Discrete calculus of Variations for Oscillatory Quadratic Lagrangians. Convergence Issues},
submitted to SIAM J. Control Optim. (December 2010).

\bibitem{TM} {\sc F. Tisseur, K. Meerbergen},
{\em The quadratic eigenvalue problem}, SIAM review, 43 (2), 235-286.
\end{thebibliography}
\end{document}